\newtheorem{theorem}{Theorem}
\newtheorem{lemma}[theorem]{Lemma}
\newtheorem{proposition}[theorem]{Proposition}
\newtheorem{claim}{Claim}
\newtheorem{claim1}{Claim}
\newtheorem{question}[theorem]{Question}
\newtheorem{conjecture}[theorem]{Conjecture}
\newtheorem{corollary}[theorem]{Corollary}
\theoremstyle{definition}
\theoremstyle{remark}
\newcommand{\cF}{\mathcal{F}}
\newcommand{\cN}{\mathcal N}
\begin{document}
\newcommand{\Addresses}{
\bigskip
\footnotesize
		
\medskip

\noindent Maria-Romina~Ivan, \textsc{Department of Pure Mathematics and Mathematical Statistics, Centre for Mathematical Sciences, Wilberforce Road, Cambridge, CB3 0WB, UK,} and\\\textsc{Department of Mathematics, Stanford University, 450 Jane Stanford Way, CA 94304, USA.}\par\noindent\nopagebreak\textit{Email addresses: }\texttt{mri25@dpmms.cam.ac.uk, m.r.ivan@stanford.edu}
		
\medskip
		
\noindent Nandi~Wang, \textsc{Magdalene College, University of Cambridge, CB3 0AG, UK.}\par\noindent\nopagebreak\textit{Email address: }\texttt{nw443@cam.ac.uk}}

\pagestyle{fancy}
\fancyhf{}
\fancyhead [LE, RO] {\thepage}
\fancyhead [CE] {MARIA-ROMINA IVAN AND NANDI WANG}
\fancyhead [CO] {LINEAR SATURATION FOR $\mathcal N$ VIA BUTTERFLIES}
\renewcommand{\headrulewidth}{0pt}
\renewcommand{\l}{\rule{6em}{1pt}\ }
\title{\Large{\textbf{LINEAR SATURATION FOR $\mathcal N$ VIA BUTTERFLIES}}}
\author{MARIA-ROMINA IVAN AND NANDI WANG}
\date{ }
\maketitle
\begin{abstract}
Given a finite poset $\mathcal P$, how small can a family $\mathcal F$ of subsets of $[n]$ be such that $\mathcal F$ does not contain an induced copy of $\mathcal P$, but $\mathcal F\cup\{X\}$ contains such a copy for all $X\in\mathcal P([n])\setminus\mathcal F$? This is known as the induced saturation number of $\mathcal P$, denoted by $\text{sat}^*(n,\mathcal P)$. The main conjecture in the area is that the induced saturation number for any poset is either bounded, or linear.

In this paper we establish linearity for the induced saturation number of the 4-point poset $\mathcal N$. Previously, it was known that $2\sqrt n\leq\text{sat}^*(n,\mathcal N)\leq 2n$. We show that $\text{sat}^*(n,\mathcal N)\geq\frac{n+6}{4}$. A crucial role in the proof is played by a structural feature of $\mathcal N$-saturated families, namely that if the family contains two antichains, one completely above the other, then it must also contain a `middle' point -- greater than one antichain and less than the other.
\end{abstract}
	
\section{Introduction}
We say that a poset $(\mathcal Q, \leq')$ contains an \textit{induced copy} of a poset $(\mathcal P, \leq)$ if there exists an injective function $f:\mathcal P\rightarrow\mathcal Q$ such that $(\mathcal P, \leq)$ and $(f(\mathcal P), \leq')$ are isomorphic. For a fixed poset $\mathcal P$ we call a family $\mathcal F$ of subsets of $[n]=\{1,2,\dots,n\}$ $\mathcal P$-\textit{saturated} if $\mathcal F$ does not contain an induced copy of $\mathcal P$, but for every subset $S$ of $[n]$ such that $S\notin\mathcal F$, the family $\mathcal F\cup \{S\}$ does contain such a copy. We denote by $\text{sat}^*(n, \mathcal P)$ the size of the smallest $\mathcal P$-saturated family of subsets of $[n]$. In general we refer to $\text{sat}^*(n, \mathcal P)$ as the \textit{induced saturation number} of $\mathcal P$.

This notion, inspired by saturation for graphs, was introduced by Ferrara, Kay, Kramer, Martin, Reiniger, Smith and Sullivan \cite{ferrara2017saturation}. Despite their similarity, graph saturation and poset saturation are vastly different, partially due to the rigidity of \textit{induced} copies of posets. One of the most important conjectures in the area is that for every poset the saturation number is either bounded or linear \cite{keszegh2021induced}. The saturation number for posets has already been shown to display a dichotomy. Keszegh, Lemons, Martin, P\'alv\"olgy and Patk\'os \cite{keszegh2021induced} showed that the saturation number is either bounded or at least $\log_2(n)$. This was later improved by Freschi, Piga, Sharifzadeh and Treglown \cite{freschi2023induced} who showed that $\text{sat}^*(n,\mathcal P)$ is either bounded or at least $2\sqrt n$. In the other direction, Bastide, Groenland, Ivan and Johnston \cite{polynomial} showed that the saturation number of any poset grows at most polynomially. Moreover, very recently Ivan and Jaffe \cite{gluing} showed that for any poset, one can add at most 3 points to obtain a poset with saturation number at most linear.  

Even more surprisingly, establishing the rate of growth even for simple posets is difficult, with only a handful of examples for which linearity or boundedness has been proven. Most notably, the following posets have linear saturation number: the butterfly (two maximal elements completely above two minimal elements) \cite{ivan2020saturationbutterflyposet, keszegh2021induced}, the antichain \cite{bastide2024exact}, and most recently the diamond (one maximal, one minimal and two other incomparable elements) \cite{diamondlinear}, whose saturation number was stuck for a long time at the $\sqrt n$ lower bound. 

In this paper we establish linearity for another key 4-poset, the $\mathcal N$, comprised of 2 minimal and 2 maximal elements, such that exactly one maximal element is comparable to both minimal elements, and exactly one minimal element is comparable to both maximal elements, as depicted below.
\begin{center}
\begin{figure}[h]
\centering
\hspace{0.1cm}\\
\begin{tikzpicture}
\node (top1) at (3,1) {$\bullet$}; 
\node (bottom1) at (3,-1) {$\bullet$};
\node (top2) at (5,1) {$\bullet$};
\node (bottom2) at (5,-1) {$\bullet$};
\draw (bottom1) -- (top1) -- (bottom2) -- (top2);
\end{tikzpicture}
\end{figure}
The Hasse diagram of the poset $\mathcal N$
\end{center}
The Hasse diagram of $\mathcal N$ is misleadingly simple. One of the main challenges when it comes to analysing this structure comes from the `broken' symmetry of the $\mathcal N$ -- some of its elements are comparable to exactly one other, and others are comparable to 2 -- unlike for example the very symmetric butterfly poset. 

It is not hard to check that the family $\mathcal F=\{\emptyset, [n], \{i\}, \{1,2,\dots,i\}:1\leq i\leq n\}$ is $\mathcal N$-saturated for all $n\geq 3$, hence $sat^*(n,\mathcal N)\leq 2n$ \cite{ferrara2017saturation}. However, the lower bound proved much more challenging. It was first shown that $\log_2n\leq\text{sat}^*(n,\mathcal N)$ \cite{ferrara2017saturation}, which was then improved to $\sqrt n\leq\text{sat}^*(n,\mathcal N)$ \cite{ivan2020saturationbutterflyposet}, and most recently to $2\sqrt n\leq\text{sat}^*(n,\mathcal N)$ \cite{freschi2023induced}. We mention that these bounds were not specific to the $\mathcal N$ poset -- they were in fact a consequence of the fact that the $\mathcal N$ is part of a larger class of special posets, namely posets that have the \textit{unique cover twin property} (UCTP).

In this paper we show that the size of any $\mathcal N$-saturated family with ground set $[n]$ is at least $\frac{n+6}{4}$, therefore showing that $\text{sat}^*(n,\mathcal N)=\Theta(n)$. This also concludes the analysis of all posets discussed in the paper that launched the field \cite{ferrara2017saturation}, as the $\mathcal N$ poset was the final missing piece. 

Our proof is heavily based on Lemma~\ref{lem:complete bip graph has a midpoint}, which asserts the following.
\setcounter{theorem}{2}
\begin{lemma}
Let $\mathcal F$ be an $\mathcal N$-saturated family, and $A_1, \dots, A_k$ and  $B_1, \dots, B_l$ sets in $\mathcal F$ such that $\cup_{i=1}^l B_i \subseteq \cap_{i=1}^k A_i$, where $k,l\geq 1$. Then, there exists $ M \in \mathcal F$ such that $\cup_{i=1}^l B_i \subseteq M \subseteq \cap_{i=1}^k A_i$.
\end{lemma}
\setcounter{theorem}{0}
In particular, if $\mathcal F$ contains a butterfly, it must also contain a `middle' point, less than its maximal elements and bigger than its minimal elements. This is crucial to our analysis as the existence of this midpoint will serve as a pivot for identifying forbidden copies of  $\mathcal N$.

The paper is organised as follows. Section 2 is dedicated to establishing the above mentioned connection between $\mathcal N$-saturated families and butterflies. In Section 3 we lay the groundwork for the main result by proving a few general lemmas that will be used frequently in the subsequent analysis. All these results come together in Section 4, Proposition~\ref{prop:main result}, from which the linear lower bound for $\text{sat}^*(n,\mathcal N)$ is then easily obtained. 

A great number of our lemmas are accompanied by diagrams -- however, as the analysis does not exhaustively investigate the relationship between each pair of elements, the lack of an edge does not immediately imply incomparability.

\section{$\mathcal N$-saturated families and butterflies}
In this section we analyse the structure of an $\mathcal N$-saturated family. More precisely, we show that in such a family, if there exist two antichains one above the other, then there exists a set in the family that lies between them. This will motivate our analysis in later sections.

Let $\mathcal F$ be an $\mathcal N$-saturated family with ground set $[n]$, where $n\geq3$. We observe that we must have $\emptyset, [n] \in \mathcal F$ as they are comparable to everything and therefore cannot form an $\mathcal N$ when added to the family.

Consider the Hasse diagram of $\mathcal F\setminus\{\emptyset, [n]\}$ as a graph, and let $\mathcal G$ be a connected component. We call $\mathcal G$ a \textit{component of $\mathcal F$} for simplicity.
	
We start with the following observation about maximal and minimal elements of $\mathcal G$.
\begin{lemma}\label{lem:max min in same comp are comparable}
Let $T$ be a maximal element and $S$ a minimal element of $\mathcal G$. Then $S \subseteq T$.
\end{lemma}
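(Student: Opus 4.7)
The plan is a proof by contradiction. Assume $S\not\subseteq T$: since $T$ is maximal in $\mathcal G$ the alternative $T\subsetneq S$ would violate maximality, so $S$ and $T$ must in fact be incomparable. The goal is then to produce an induced copy of $\mathcal N$ inside $\mathcal F$, contradicting saturation.

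Fix a shortest path $S=X_0,X_1,\ldots,X_k=T$ in the Hasse diagram of $\mathcal G$. Because $S$ is minimal, the first cover edge goes upward, giving $S\subsetneq X_1$; symmetrically $X_{k-1}\subsetneq T$. The small cases are immediate: $k=1$ makes $S$ and $T$ directly comparable via a cover, and $k=2$ yields the chain $S\subsetneq X_1\subsetneq T$. So I may assume $k\geq 3$. The same min/max reasoning then forces $X_1$ incomparable to $T$ (since $T\subsetneq X_1$ would violate maximality of $T$, while $X_1\subsetneq T$ would give $S\subsetneq T$), and symmetrically $S$ incomparable to $X_{k-1}$.

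The four-tuple $\{S,X_{k-1},X_1,T\}$ is a natural induced-$\mathcal N$ candidate, already carrying the incomparabilities $(S,T)$, $(S,X_{k-1})$, $(X_1,T)$ and the containments $S\subsetneq X_1$, $X_{k-1}\subsetneq T$. Moreover, $X_1\subseteq X_{k-1}$ is impossible because it would force $S\subsetneq X_{k-1}$, contradicting incomparability. So either $X_{k-1}\subsetneq X_1$, in which case the four sets realise an induced $\mathcal N$ (bottom antichain $\{S,X_{k-1}\}$, top antichain $\{X_1,T\}$, with cover relations $S\subsetneq X_1$, $X_{k-1}\subsetneq X_1$, $X_{k-1}\subsetneq T$), contradicting $\mathcal N$-saturation; or $X_1$ and $X_{k-1}$ are themselves incomparable.

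The latter subcase is the main obstacle, since then the four-tuple has only two containments and no longer embeds an $\mathcal N$. My plan here is to bootstrap by strong induction on $k$: choose a minimal element $S'$ of $\mathcal G$ obtained by descending from $X_{k-1}$ via Hasse edges, and a maximal element $T'$ of $\mathcal G$ obtained by ascending from $X_1$, and apply the inductive hypothesis to the auxiliary min/max pairs $(S,T')$ and $(S',T)$ --- which should be joined by strictly shorter Hasse paths than $k$ --- to extract the containments $S\subseteq T'$ and $S'\subseteq T$. A final case split on the relation between $S'$ and $T'$ should then exhibit an induced $\mathcal N$ on $\{S,S',T',T\}$ (with bottom antichain $\{S,S'\}$, top antichain $\{T,T'\}$), contradicting saturation. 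The delicate bookkeeping will be verifying that the auxiliary paths are indeed strictly shorter than $k$, which I expect to follow from carefully analysing the two covers incident to $X_1$ and $X_{k-1}$ on the original shortest path.
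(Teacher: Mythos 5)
Your opening reductions are fine (first and last Hasse edges go upward, the small cases, the incomparabilities of $X_1$ with $T$ and of $S$ with $X_{k-1}$, and the subcase $X_{k-1}\subsetneq X_1$ yielding an induced $\mathcal N$), but the main subcase --- $X_1$ incomparable to $X_{k-1}$ --- is not actually resolved, and the proposed induction has two genuine gaps. First, the induction on the shortest Hasse-path length $k$ is not well-founded for your auxiliary pairs: descending from $X_{k-1}$ to a minimal element $S'$ or ascending from $X_1$ to a maximal element $T'$ can take arbitrarily many cover steps, and nothing bounds the Hasse distances $d(S,T')$, $d(S',T)$ by $k-1$; a component can easily have $k=3$ while every route from $X_1$ up to a maximal element is long, so ``strictly shorter auxiliary paths'' is not something you can expect to verify. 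Second, even granting $S\subseteq T'$ and $S'\subseteq T$, your final case split does not close: if $S'$ and $T'$ happen to be incomparable, the four sets $\{S,S',T',T\}$ carry only the two disjoint comparabilities $S\subseteq T'$ and $S'\subseteq T$ (a $2+2$, not an $\mathcal N$), and ruling that out is exactly the statement of the lemma for the pair $(S',T')$, with no control on its distance --- so the argument becomes circular precisely where it matters.

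The paper sidesteps all of this by changing the graph: add to the Hasse diagram an edge between every pair of comparable sets, and take a shortest $S$--$T$ path in this comparability graph. Minimality there gives two facts you do not have for Hasse paths: consecutive edges must alternate in direction (otherwise transitivity creates a chord), and non-adjacent path vertices are incomparable (a comparable pair is an edge, hence a shortcut). With $S$ minimal forcing the first edge upward, the first four vertices $S\subset X_2\supset X_3\subset X_4$ of any path of length at least three edges form an induced $\mathcal N$ in $\mathcal F$, a contradiction; the cases of length one or two give $S\subseteq T$ directly. If you want to salvage your outline, the fix is exactly this: run your shortest-path argument in the comparability graph rather than the Hasse diagram, which eliminates the need for the auxiliary minimal/maximal elements and the induction altogether.
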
 
\begin{proof}
If $S=T$, there is nothing to prove, so we may assume that $S\neq T$. Next, we add to the Hasse diagram of $\mathcal G$ all edges $XY$ if $X\subset Y$. In other words, we take the transitive closure of the Hasse diagram. In this new graph, by connectedness, there exists a shortest path from $S$ to $T$, say $S=X_1,X_2,\dots, X_l=T$, where $X_i\in \mathcal F\setminus\{\emptyset, [n]\}$ are pairwise distinct. If $l=2$, we are done. If $l=3$, we must have $S \subset X_2 \subset T$  since $S$ is a minimal element and $T$ is a maximal element, but then $S\subset T$ is a shorter path, a contradiction. Hence we may assume $l \geq 4$.

We regard this path as a directed one, with an arrow from $X_i$ to $X_{i+1}$ if $X_i\subset X_{i+1}$, and from $X_{i+1}$ to $X_i$ otherwise. By minimality of the path, the edges must alternate in direction, otherwise by transitivity there are some non-adjacent elements $X_i, X_j$ comparable, and replacing the edges $X_iX_{i+1}, \dots, X_{j-1}X_j$ by $X_iX_j$ gives a shorter path. Thus we have $S \subset X_2, X_2 \supset X_3, X_3 \subset X_4$, where $S$ is incomparable to both $X_3$ and $X_4$, and $X_2$ is incomparable to $X_4$. Therefore $S, X_2, X_3, X_4$ form an $\cN$ in $\cF$, a contradiction.
\end{proof}
The above lemma tells us that the maximal and minimal elements of a component $\mathcal G$ form a complete bipartite poset. Lemma~\ref{lem:complete bip graph has a midpoint} unifies these elements by identifying a set in $\mathcal F$ comparable to all maximal and all minimal elements of $\mathcal G$. The next result is an intermediary step towards that. 
\begin{lemma}\label{lem:positions of intersections/unions in N}
Let $A_1, \dots, A_k$ and $B_1, \dots, B_l$ be elements of $\mathcal F$, where $k,l\geq1$. If $\cup_{i=1}^{l} B_i \notin \mathcal F$, then $\cup_{i=1}^l B_i$ must be a maximal element of any $\mathcal N$ formed when added to $\cF$. Similarly, if $\cap_{i=1}^{k} A_i \notin \mathcal F$, then $\cap_{i=1}^{k}A_i$ must be a minimal element of any $\mathcal N$ formed when added to $\mathcal F$. Furthermore, $\cup_{i=1}^l B_i$ can be assumed to be the maximal element of some $\mathcal N$, that is comparable to both minimal elements, and $\cap_{i=1}^k A_i$ can be assumed to be the minimal element of  some $\mathcal N$, that is comparable to both maximal elements, as illustrated below.
\begin{figure}[H]
\begin{minipage}{0.45\textwidth}
\centering
\begin{tikzpicture}[scale=1]
\node at (0,2) (Y) {\textbf{$\bullet$}};
\node at (2,2) (X) {\textbf{$\bullet$}};
\node at (0,0) (Z) {\textbf{$\bullet$}};
\node at (2,0) (U) {\textbf{$\bullet$}};
\node[left=1pt of Y] {$\cup_{i=1}^l B_i$};
\draw (Z) -- (Y);
\draw (X) -- (U);
\draw (U) -- (Y);
\end{tikzpicture}
\end{minipage}
\hfill
\begin{minipage}{0.45\textwidth}
\centering
\begin{tikzpicture}[scale=1]
\node at (0,2) (X) {\textbf{$\bullet$}};
\node at (2,2) (U) {\textbf{$\bullet$}};
\node at (0,0) (Y) {\textbf{$\bullet$}};
\node at (2,0) (Z) {\textbf{$\bullet$}};
\node[right=1pt of Z]{$\cap_{i=1}^k A_i$};

\draw (X) -- (Y);
\draw (X) -- (Z);
\draw (U) -- (Z);
\end{tikzpicture}
\end{minipage}
\end{figure}
\end{lemma}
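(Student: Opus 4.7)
Write $I := \bigcap_{i=1}^k A_i$. Since $\mathcal F$ is $\mathcal N$-saturated and $I\notin\mathcal F$, the family $\mathcal F\cup\{I\}$ contains an induced $\mathcal N$ which must use $I$ (as $\mathcal F$ itself is $\mathcal N$-free). To prove $I$ is a minimal element of such an $\mathcal N$, the plan is a contradiction argument by substitution. Suppose $I$ is a maximal element and let $T$ denote the other maximum, so $I$ and $T$ are incomparable. The key observation is that no $A_j$ can be contained in $T$ (else $I\subseteq A_j\subseteq T$), and not every $A_j$ contains $T$ (else $T\subseteq I$), so there exists an $A_{j_0}$ that is incomparable to $T$. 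Replacing $I$ by $A_{j_0}$ in the $\mathcal N$ should give a configuration lying inside $\mathcal F$: each minimum $B$ of the $\mathcal N$ satisfies $B\subsetneq I\subsetneq A_{j_0}$ strictly (strictness because $A_{j_0}\in\mathcal F$ but $I\notin\mathcal F$), and the required incomparabilities between $B$ (or the other max) and $T$ are inherited. A short verification, split only into the two cases ``$I$ above both minima'' and ``$I$ above one minimum,'' produces an $\mathcal N$ sitting in $\mathcal F$, contradicting saturation.

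For the ``furthermore'' clause, start from an $\mathcal N$ $\{I,B,T_1,T_2\}$ with $I$ a minimum; if $I\subsetneq T_1$ and $I\subsetneq T_2$ we are done, so assume $I\subsetneq T_1$ and $I$ is incomparable to $T_2$ (with $B\subsetneq T_1,T_2$, $T_1,T_2$ incomparable, $I,B$ incomparable). Applying the same trick with $T_2$ in the role of $T$ yields $A_{j_0}$ with $I\subsetneq A_{j_0}$ and $A_{j_0},T_2$ incomparable. The plan is then to case-split on how $A_{j_0}$ sits relative to $T_1$ and $B$. In the ``clean'' cases, one reads off the desired $\mathcal N$ directly: for example, $\{I,B,T_1,A_{j_0}\}$ works when $A_{j_0}$ is incomparable to both $T_1$ and $B$, and $\{I,T_2,T_1,A_{j_0}\}$ works when $A_{j_0}$ is incomparable to $T_1$ and $T_2\subsetneq A_{j_0}$. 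In the remaining ``bad'' cases (for instance $A_{j_0}\subsetneq T_1$, or $B\subsetneq A_{j_0}$ with $A_{j_0},T_1$ also incomparable), either the four sets already present form an $\mathcal N$ inside $\mathcal F$ contradicting saturation (e.g.\ $\{A_{j_0},B,T_1,T_2\}$ when $A_{j_0}\subsetneq T_1$ and $A_{j_0},B$ incomparable), or I pass to a second set $A_{j_1}$ with $B\not\subseteq A_{j_1}$ -- such $j_1$ exists because $B\not\subseteq I=\bigcap_j A_j$ -- and run the same dichotomy for $A_{j_1}$, which forces $B,A_{j_1}$ incomparable and, depending on the relation of $A_{j_1}$ with $T_1$, yields either $\{I,B,T_1,A_{j_1}\}$ as the desired $\mathcal N$ with $I$ as $b_2$, or $\{B,A_{j_1},T_1,T_2\}$ as an $\mathcal N$ already inside $\mathcal F$, a contradiction.

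The corresponding statement for $\bigcup_{i=1}^l B_i$ follows by duality: the complementation map $S\mapsto[n]\setminus S$ is an order-reversing bijection on $\mathcal P([n])$, under which $\mathcal N$ is self-dual (swapping the two maxima with the two minima), and which sends $\mathcal N$-saturated families to $\mathcal N$-saturated families; applying the intersection result to the complemented family then gives the union result. The main obstacle is undoubtedly the ``furthermore'' clause: the most natural substitution $T_2\to A_{j_0}$ often leaves $I$ as the ``half-connected'' minimum rather than as the minimum below both maxes, so the delicate work is to show that either a second substitution with a different $A_{j_1}$ repairs this, or any attempt to do so exposes an $\mathcal N$ already in $\mathcal F$. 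The required bookkeeping, tracking simultaneous comparabilities among $B$, $T_1$, $T_2$, $A_{j_0}$ and $A_{j_1}$, is where the real work of the proof lies.
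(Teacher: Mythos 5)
Your overall strategy---substituting a suitable $A_j$ for $I=\cap_{i=1}^k A_i$ and invoking $\mathcal N$-freeness of $\mathcal F$, with the union statement obtained by complementation---is the same as the paper's (which works with the union and substitutes a suitable $B_j$). Your treatment of the ``furthermore'' clause is essentially correct: the fallback set $A_{j_1}$ with $B\not\subseteq A_{j_1}$ is exactly the right choice, and in fact you could have started with it and skipped the $A_{j_0}$ case analysis entirely; this is precisely the paper's (dualized) argument. Two small points there: you should record why $A_{j_1}$ is incomparable to $T_2$ (namely $T_2\subseteq A_{j_1}$ would force $B\subseteq A_{j_1}$, and $A_{j_1}\subseteq T_2$ would force $I\subseteq T_2$), and your ``clean'' case $T_2\subsetneq A_{j_0}$ cannot occur, since $A_{j_0}$ was chosen incomparable to $T_2$.

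The genuine gap is in the first part. If $I$ is the maximal element comparable to exactly one minimal, say $I\supset m_1$ while the other maximal $T$ satisfies $T\supset m_1,m_2$ and $m_2$ is incomparable to $I$, then your assertion that ``each minimum $B$ of the $\mathcal N$ satisfies $B\subsetneq I$'' is false ($m_2\not\subseteq I$), and the incomparability you actually need after the substitution---between $A_{j_0}$ and $m_2$---is neither inherited nor guaranteed by choosing $A_{j_0}$ incomparable to $T$: it may happen that $m_2\subset A_{j_0}$, in which case $\{A_{j_0},T,m_1,m_2\}$ is a butterfly, not an $\mathcal N$, and no contradiction results. The repair is the same move you make later: since $m_2\not\subseteq I=\cap_{j} A_j$, choose $A_{j_0}$ with $m_2\not\subseteq A_{j_0}$; then $A_{j_0}$ is incomparable to $m_2$ (as $A_{j_0}\subseteq m_2$ would give $I\subseteq m_2$) and automatically incomparable to $T$ ($m_2\subseteq T$ gives $T\not\subseteq A_{j_0}$, and $I\subseteq A_{j_0}$ with $I\not\subseteq T$ gives $A_{j_0}\not\subseteq T$), so $\{A_{j_0},T,m_1,m_2\}$ is an induced $\mathcal N$ inside $\mathcal F$, the desired contradiction. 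This is exactly how the paper handles that case (stated there for the union: the chosen $B_s$ is taken incomparable to the maximal element lying above both minima, and the remaining incomparabilities are derived rather than inherited).
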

\begin{proof} By the symmetry of both the statements and the structure of $\mathcal N$ (under taking complements), it is enough to prove the results for $\cup_{i=1}^l B_i$.
		
Suppose that $\cup_{i=1}^l B_i$ is a minimal element of an $\mathcal N$ formed with 3 other elements of $\mathcal F$. Then it is either the minimal element comparable with both maximal elements (figure (i) below), or it is the minimal element comparable to exactly one maximal element (figure (ii) below). Let $X$ denote the maximal element comparable with both minimal elements.
\begin{figure}[H]
\begin{minipage}{0.45\textwidth}
\centering
\begin{tikzpicture}[scale=1]
\node at (0,2) (Y) {\textbf{$\bullet$}};
\node at (2,2) (X) {\textbf{$\bullet$}};
\node at (0,0) (Z) {\textbf{$\bullet$}};
\node at (2,0) (U) {\textbf{$\bullet$}};
\node[right=1pt of U] {$\cup_{i=1}^l B_i$};
\node[left=1pt of Z] {$Z$};
\node[left=1pt of Y] {$X$};
\node[right=1pt of X] {$Y$};
\draw (Z) -- (Y);
\draw (X) -- (U);
\draw (U) -- (Y);
\end{tikzpicture}
\caption*{(i)}
\end{minipage}
\hfill
\begin{minipage}{0.45\textwidth}
\centering
\begin{tikzpicture}[scale=1]
\node at (0,2) (X) {\textbf{$\bullet$}};
\node at (2,2) (U) {\textbf{$\bullet$}};
\node at (0,0) (Y) {\textbf{$\bullet$}};
\node at (2,0) (Z) {\textbf{$\bullet$}};
\node[left=1pt of Y] {$\cup_{i=1}^l B_i$};
\node[right=1pt of Z] {$Z'$};
\node[right=1pt of U] {$Y'$};
\node[left=1pt of X] {$X$};
\draw (X) -- (Y);
\draw (X) -- (Z);
\draw (U) -- (Z);
\end{tikzpicture}
\caption*{(ii)}
\end{minipage}
\end{figure}

In the first case, since $\cup_{i=1}^l B_i$ is incomparable to $Z$, $Z\not\subseteq B_i$ for all $1\leq i\leq l$. Moreover, there exists $B_j \not\subset Z$ for some $1\leq j\leq l$. This implies that $Z$, $X$, $Y$ and $B_j$ form an $\mathcal N$ in $\mathcal F$, a contradiction.
		
In the second case, since $\cup_{i=1}^lB_i$ is incomparable to $Y'$, then $Y'\not\subseteq B_i$ for all $1\leq i\leq l$, and there exists $B_s$ such that $B_s\not\subset Y'$ for some $1\leq s\leq l$. Since $B_s$ and $Y'$ are incomparable, $B_s\not\subseteq Z'$, and since $Z'$ and $\cup_{i=1}^lB_i$ are incomparable, $Z'\not\subseteq B_s$. This means that $Z'$ and $B_s$ are incomparable, which yields a copy of $\mathcal N$ in $\mathcal F$, given by $B_s, X, Z'$ and $Y'$. 

This finishes the first part of the lemma.

Therefore, if $\cup_{i=1}^lB_i\notin\mathcal F$, it can only be a maximal element of any $\mathcal N$ formed when added to the family, as shown in below.
\begin{figure}[H]
\begin{minipage}{0.45\textwidth}
\centering
\begin{tikzpicture}[scale=1]
\node at (0,2) (X) {\textbf{$\bullet$}};
\node at (2,2) (U) {\textbf{$\bullet$}};
\node at (0,0) (Y) {\textbf{$\bullet$}};
\node at (2,0) (Z) {\textbf{$\bullet$}};
\node[left=1pt of X] {$Z_1$};
\node[left=1pt of Y] {$Y_1$};
\node[right=1pt of U] {$\cup_{i=1}^l B_i$};
\node[right=1pt of Z] {$X_1$};
\draw (X) -- (Y);
\draw (X) -- (Z);
\draw (U) -- (Z);
\end{tikzpicture}
\caption*{(iii)}
\end{minipage}
\hfill
\begin{minipage}{0.45\textwidth}
\centering
\begin{tikzpicture}[scale=1]
\node at (0,2) (X) {\textbf{$\bullet$}};
\node at (2,2) (U) {\textbf{$\bullet$}};
\node at (0,0) (Y) {\textbf{$\bullet$}};
\node at (2,0) (Z) {\textbf{$\bullet$}};
\node[left=1pt of X] {$\cup_{i=1}^l B_i$};
\node[left=1pt of Y] {$X_2$};
\node[right=1pt of U] {$Z_2$};
\node[right=1pt of Z] {$Y_2$};
\draw (X) -- (Y);
\draw (X) -- (Z);
\draw (U) -- (Z);
\end{tikzpicture}
\caption*{(iv)}
\end{minipage}
\end{figure}

To finish the claim, we show that in fact, we may always assume that $\cup_{i=1}^lB_i$ is the maximal element comparable to both minimal elements (figure (iv)).	

Suppose that it is not, and therefore $\cup_{i=1}^lB_i$ forms an $\mathcal N$ with $X_1, Y_1, Z_1\in\mathcal F$, as shown in figure (iii). Since $\cup_{i=1}^{l} B_i$ is incomparable to $Z_1$, as before, there exists $B_j$ incomparable to $Z_1$, for some $1\leq j\leq l$. Furthermore, since $Y_1 \subset Z_1$, $B_j \not \subseteq Y_1$, and since $\cup_{i=1}^l B_i$ is incomparable to $Y_1$ we also get that $Y_1 \not \subset B_j$. Therefore $B_j$ and $Y_1$ are incomparable. We also note that $B_j \not \subseteq X_1\subset Z_1$, as $B_j$ is incomparable to $Z_1$.
		
To avoid $B_j, X_1, Y_1, Z_1$ forming an $\mathcal N$ in $\mathcal F$, we must have $X_1 \not \subset B_j$. Therefore, $B_j$ and $X_1$ are incomparable. This implies that $X_1, Z_1, \cup_{i=1}^l B_i, B_j$ form an $\mathcal N$ where $\cup_{i=1}^l B_i$ is the maximal comparable to both minimal elements (as shown in figure (iv)). This finishes the claim.
\end{proof}
We are now ready to prove the main result of this section.	
\begin{lemma}\label{lem:complete bip graph has a midpoint}
Let $A_1, \dots, A_k$ and  $B_1, \dots, B_l$ be sets in $\mathcal F$ such that $\cup_{i=1}^l B_i \subseteq \cap_{i=1}^k A_i$, where $k,l\geq 1$. Then there exists $ M \in \mathcal F$ such that $\cup_{i=1}^l B_i \subseteq M \subseteq \cap_{i=1}^k A_i$, as illustrated below.
\begin{center}
\begin{tikzpicture}[scale=1]
\node at (0,0) (B1) {\textbf{$\bullet$}};
\node at (0,2) (A1) {\textbf{$\bullet$}};
\node at (1,2) (A2) {\textbf{$\bullet$}};
\node at (3,2) (Akm) {\textbf{$\bullet$}};
\node at (4,2) (Ak) {\textbf{$\bullet$}};
\node at (1,0) (B2) {\textbf{$\bullet$}};
\node at (3,0) (Blm) {\textbf{$\bullet$}};
\node at (4,0) (Bl) {\textbf{$\bullet$}};
\node at (2,1) (M) {\textbf{$\bullet$}};
\node at (2,0) {$\cdots$};
\node at (2,2) {$\cdots$};
\node[above=1pt of A1] {$A_1$};
\node[above=1pt of A2] {$A_2$};
\node[above=1pt of Ak] {$A_k$};
\node[above=1pt of Akm] {$A_{k-1}$};
\node[below=1pt of B1] {$B_1$};
\node[below=1pt of B2] {$B_2$};
\node[below=1pt of Blm] {$B_{l-1}$};
\node[below=1pt of Bl] {$B_l$};
\node[right=1pt of M] {$M$};
\draw (A1) -- (M);
\draw (A2) -- (M);
\draw (Ak) -- (M);
\draw (Akm) -- (M);						
\draw (B1) -- (M);
\draw (B2) -- (M);
\draw (Bl) -- (M);
\draw (Blm) -- (M);
\end{tikzpicture}
\end{center}
\end{lemma}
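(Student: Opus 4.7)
The plan is to construct $M$ by an iterative union process. Start with $T_0 = \cup_{j=1}^l B_j$, and at each step $n$ do the following: if $T_n \in \mathcal F$, set $M = T_n$ and stop; otherwise, use Lemma~\ref{lem:positions of intersections/unions in N} to extend $T_n$ by a new element $Z_{n+1} \in \mathcal F$, and set $T_{n+1} = T_n \cup Z_{n+1}$. More precisely, when $T_n \notin \mathcal F$, Lemma~\ref{lem:positions of intersections/unions in N} applied to the $\mathcal F$-elements $B_1, \dots, B_l, Z_1, \dots, Z_n$ (whose union is exactly $T_n$) produces $X_n, Y_n, Z_{n+1} \in \mathcal F$ forming an induced $\mathcal N$ with $T_n$ as the central maximum: $X_n, Y_n \subsetneq T_n$, $Y_n \subsetneq Z_{n+1}$, with $X_n$ incomparable to $Y_n$ and to $Z_{n+1}$, and $T_n$ incomparable to $Z_{n+1}$.

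The crucial step is to show $Z_{n+1} \subseteq A$, where $A = \cap_{i=1}^k A_i$. Suppose otherwise: there is some $i$ with $Z_{n+1} \not\subseteq A_i$. Since $T_n \subseteq A \subseteq A_i$ and $Z_{n+1}$ is incomparable to $T_n$, we must also have $A_i \not\subseteq Z_{n+1}$ (else $T_n \subseteq A_i \subseteq Z_{n+1}$, contradicting the incomparability); hence $A_i$ and $Z_{n+1}$ are incomparable. Moreover $T_n \subsetneq A_i$ strictly, as $T_n \notin \mathcal F$ while $A_i \in \mathcal F$, so $X_n, Y_n \subsetneq A_i$. The four pairwise distinct elements $X_n, Y_n, Z_{n+1}, A_i$ then form an induced copy of $\mathcal N$ in $\mathcal F$, with $Y_n$ as the central minimum and $A_i$ as the central maximum---contradicting the $\mathcal N$-freeness of $\mathcal F$.

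Granted the claim, $T_{n+1} \subseteq A$, and $T_{n+1} \supsetneq T_n$ since $Z_{n+1}$ incomparable to $T_n$ forces $Z_{n+1} \not\subseteq T_n$. The $T_n$ thus form a strictly increasing chain of subsets of the finite set $A$, so the iteration must terminate; termination can only happen at a step where $T_{n^*} \in \mathcal F$, yielding the required midpoint $M = T_{n^*}$ satisfying $\cup_{j=1}^l B_j = T_0 \subseteq M \subseteq A$. The main obstacle I anticipate is the distinctness check for the four elements in the would-be $\mathcal N$, which reduces to the strict containments $X_n, Y_n \subsetneq T_n \subsetneq A_i$ together with the incomparabilities of $Z_{n+1}$ with $X_n$ and with $A_i$ established above.
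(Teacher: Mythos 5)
Your proposal is correct, and it takes a somewhat different and in fact leaner route than the paper. Both arguments pivot on the ``furthermore'' part of Lemma~\ref{lem:positions of intersections/unions in N}: a union of members of $\mathcal F$ that is not in $\mathcal F$ sits as the central maximal element of some copy of $\mathcal N$, and the other maximal element of that copy is the set you adjoin. The paper, however, runs a two-sided argument: it takes a counterexample minimising $|\cap_{i}A_i|-|\cup_{i}B_i|$, uses \emph{both} halves of Lemma~\ref{lem:positions of intersections/unions in N} (so that $\cap_i A_i$ is the central minimal and $\cup_i B_i$ the central maximal element of two copies of $\mathcal N$), shows $Z'\subset\cap_i A_i$, $\cup_i B_i\subset Z$ and $Z'\subset Z$, and then recurses on the configuration $Z,A_1,\dots,A_k$ over $Z',B_1,\dots,B_l$, whose gap is strictly smaller. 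You instead grow only the lower union $T_n$, and your key step---that the new maximal element $Z_{n+1}$ lies below every individual $A_i$, since otherwise $X_n,Y_n,Z_{n+1},A_i$ would be an induced $\mathcal N$ inside $\mathcal F$ (the comparability pattern and distinctness checks you list are exactly right)---replaces the paper's use of the intersection half of the lemma, which there requires knowing $\cap_i A_i\notin\mathcal F$. Termination then follows from the strict monotonicity of the $T_n$ inside the finite set $\cap_i A_i$, just as the paper's induction terminates because the gap strictly decreases. The net effect is that your proof dispenses with the extremal choice and with the entire top-side analysis (the sets $X,Y,Z$ and the comparability argument $Z'\subset Z$), at no loss of generality; the paper's symmetric squeeze yields nothing extra for this statement.
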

\begin{proof} We observe first that if $k=1$ or $l=1$, we are trivially done, so we may assume that $k,l\geq 2$.

Suppose the statement is false. Let $A_1,\dots, A_k$ and $B_1,\dots, B_l$ be a configuration as described above, for which there exists no such $M$, and $|\cap_{i=1}^kA_i|-|\cup_{i=1}^lB_i|$ is minimal among such configurations. 

We begin by observing that if $A_i \subseteq A_j$ for some $i\neq j$, then we can simply omit $A_j$. Therefore, without loss of generality, we may assume the $A_i$'s are pairwise incomparable, and similarly, that the $B_i$'s are pairwise incomparable.
	
By assumption, there is no $M \in \mathcal F$ such that $\cup_{i=1}^l B_i \subseteq M \subseteq \cap_{i=1}^k A_i$. In particular, this means that $\cup_{i=1}^l B_i, \cap_{i=1}^k A_i \notin \mathcal F$. Therefore, each must form an $\mathcal N$ with three other elements of $\mathcal F$.
		
By Lemma~\ref{lem:positions of intersections/unions in N}, $\cup_{i=1}^lB_i \neq \cap_{i=1}^k A_i$ since $\cap_{i=1}^kA_i$ can only be a minimal element of such an $\mathcal N$, and $\cup_{i=1}^lB_i$ can only be a maximal element of such an $\mathcal N$. Thus $\cup_{i=1}^lB_i \subset \cap_{i=1}^k A_i$. Moreover, putting everything together, Lemma~\ref{lem:positions of intersections/unions in N} gives us the following diagram for some $X,Y,Z,X',Y',Z'\in\mathcal F$.
\begin{center}
\begin{tikzpicture}[scale=1]
\node at (0,2) (Y) {\textbf{$\bullet$}};
\node at (2,2) (X) {\textbf{$\bullet$}};
\node at (0,1) (Z) {\textbf{$\bullet$}};
\node at (2,1) (N) {\textbf{$\bullet$}};
\node at (2,-1) (U) {\textbf{$\bullet$}};
\node at (0,-2) (Y') {\textbf{$\bullet$}};
\node at (2,-2) (X') {\textbf{$\bullet$}};
\node at (0,-1) (Z') {\textbf{$\bullet$}};
\node[right=1pt of X] {$X$};
\node[left=1pt of Y] {$Y$};
\node[left=1pt of Z] {$Z$};
\node[right=1pt of X'] {$X'$};
\node[left=1pt of Y'] {$Y'$};
\node[left=1pt of Z'] {$Z'$};
\node[right=1pt of N] {$\cap_{i=1}^k A_i$};
\node[right=1pt of U] {$\cup_{i=1}^l B_i$};
\draw (Z) -- (Y);
\draw (X) -- (N);
\draw (N) -- (Y);
\draw (X') -- (U);
\draw (Y') -- (U);
\draw (Z') -- (Y');
\draw (U) -- (N);
\end{tikzpicture}
\end{center}
First, we observe that all sets in the diagram are pairwise distinct. Indeed $\cap_{i=1}^l A_i, \cup_{i=1}^l B_i \notin \mathcal F$ are distinct from $X, X', Y, Y', Z, Z' \in \mathcal F$. Moreover, $\cup_{i=1}^l B_i\subset\cap_{i=1}^k A_i \subset X, Y$, thus $X$ and $Y$ are distinct from $X', Y', Z'$. Similarly $X'$ and $Y'$ are distinct from $X, Y, Z$. We are left to show that $Z\neq Z'$. However, if $Z = Z'$, then $X,Y, Z, X'$ would form an $\mathcal N$ in $\mathcal F$, a contradiction.
		
Next, since by Lemma~\ref{lem:positions of intersections/unions in N} $\cap_{i=1}^k A_i$ cannot be the maximal element of an $\mathcal N$, we get that $Z',Y',X'$ and $\cap_{i=1}^k A_i$ cannot form an $\mathcal N$, thus $Z'\subset \cap_{i=1}^k A_i$. Similarly, we also must have $\cup_{i=1}^l B_i\subset Z$. We further observe that $Z$ cannot be a subset of $Z'$ as $Z'$ is a subset of $\cap_{i=1}^k A_i$, and if $Z$ and $Z'$ were incomparable, then $X,Y,Z,Z'$ would form an $\mathcal N$ in $\mathcal F$, a contradiction. Therefore we must have $Z'\subset Z$, and consequently $Z'\bigcup(\cup_{i=1}^l B_i)\subseteq Z\bigcap(\cap_{i=1}^kA_i$), as depicted in the diagram below.
\begin{center}
\begin{tikzpicture}[scale=1]
\node at (0,-1) (B1) {\textbf{$\bullet$}};
\node at (0,2) (A1) {\textbf{$\bullet$}};
\node at (4,2) (Ak) {\textbf{$\bullet$}};
\node at (4,-1) (Bl) {\textbf{$\bullet$}};
\node at (2,-1) {$\cdots$};
\node at (2,2) {$\cdots$};
\node at (-1,2) (Z) {\textbf{$\bullet$}};
\node at (2,1) (N) {\textbf{$\bullet$}};
\node at (2,0) (U) {\textbf{$\bullet$}};
\node at (-1,-1) (Z') {\textbf{$\bullet$}};
\node[above=1pt of A1] {$A_1$};
\node[above=1pt of Ak] {$A_k$};
\node[below=1pt of B1] {$B_1$};
\node[below=1pt of Bl] {$B_l$};
\node[above=1pt of Z] {$Z$};
\node[below=1pt of Z'] {$Z'$};
\node[right=2pt of N] {$\cap_{i=1}^k A_i$};
\node[right=2pt of U] {$\cup_{i=1}^l B_i$};
\draw (U) -- (Bl);
\draw (U) -- (B1);
\draw (N) -- (Ak);
\draw (N) -- (A1);
\draw (N) -- (Z');
\draw (Z) -- (U);
\draw (Z') -- (Z);
\draw (U) -- (N);
\end{tikzpicture}
\end{center}
Using the fact that $Z$ and $\cap_{i=1}^kA_i$ are incomparable, and that $Z'$ and $\cup_{i=1}^l B_i$ are incomparable, we get that $\cup_{i=1}^{l} B_i \subset Z'\bigcup(\cup_{i=1}^l B_i )\subseteq Z \bigcap(\cap_{i=1}^k A_i )\subset \cap_{i=1}^k A_i$.  Therefore $(Z\bigcap(\cap_{i=1}^k A_i))\setminus (Z'\bigcup(\cup_{i=1}^l B_i))\subset \cap_{i=1}^k A_i\setminus\cup_{i=1}^l B_i$.
		
By the minimality of $|\cap_{i=1}^k A_i|-|\cup_{i=1}^l B_i|$, the claim must hold for the configuration $Z, A_1, \dots, A_k$, $Z', B_1, \dots, B_l$. Thus, there exists $ M \in \mathcal F$ such that $Z'\bigcup(\cup_{i=1}^l B_i)\subseteq M \subseteq Z\bigcap(\cap_{i=1}^k A_i)$. This however implies that $\cup_{i=1}^l B_i \subseteq M \subseteq \cap_{i=1}^k A_i$, a contradiction, finishing the proof of the lemma.
\end{proof}
The above lemma will be used in the paper in the case where $k=l=2$, or in other words, whenever we have butterflies in our $\mathcal N$-saturated family.
\begin{corollary}\label{cor:butterfly} Let $\mathcal F$ be an $\mathcal N$-saturated family, and $A, B, C, D \in \mathcal F$ such that they form a butterfly with maximal elements $A,B$. Then there exists some $M \in \mathcal F$ such that $C\cup D \subseteq M \subseteq A\cap B$.
\end{corollary}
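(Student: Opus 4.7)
The plan is to observe that this corollary is simply the special case $k=l=2$ of Lemma~\ref{lem:complete bip graph has a midpoint}, so the proof reduces to verifying that the hypothesis of that lemma holds.

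First I would unpack the definition of butterfly. Since $A,B,C,D$ form a butterfly with maximal elements $A,B$, the minimal elements are $C$ and $D$, and every minimal element is comparable to (hence contained in) every maximal element. Thus $C\subseteq A$, $C\subseteq B$, $D\subseteq A$, and $D\subseteq B$. Taking unions on the left and intersections on the right, this gives $C\cup D\subseteq A\cap B$, which is precisely the containment hypothesis of Lemma~\ref{lem:complete bip graph has a midpoint} for the collections $A_1=A,A_2=B$ and $B_1=C,B_2=D$.

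Applying Lemma~\ref{lem:complete bip graph has a midpoint} with $k=l=2$ then yields an $M\in\mathcal F$ with $C\cup D\subseteq M\subseteq A\cap B$, which is exactly the conclusion we want. There is no real obstacle; the only thing to be careful about is the formal step of rewriting the butterfly configuration in the language of the lemma, which requires nothing beyond the observation above.
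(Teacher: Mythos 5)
Your proposal is correct and matches the paper's approach exactly: the corollary is stated immediately after Lemma~\ref{lem:complete bip graph has a midpoint} precisely as its $k=l=2$ instance, with the butterfly condition giving $C,D\subseteq A\cap B$ and hence $C\cup D\subseteq A\cap B$.
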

	
Let $\mathcal G$ be a component of $\mathcal F$. Lemma~\ref{lem:max min in same comp are comparable} and \ref{lem:complete bip graph has a midpoint} tell us that $\mathcal G$ has a `midpoint' between its maximal elements and its minimal elements. It turns out that at least one of these `midpoints' is a `one-way valve' in the sense that everything in the component is comparable to it.

\begin{lemma}\label{lem:max/min midpoint comparable to everything} Let $\mathcal G$ be a component of $\mathcal F$. Let $B_1,\dots, B_l$ be the minimal elements, and $A_1,\dots,A_k$ be the maximal elements of $\mathcal G$. Let $M\in\mathcal F$ be such that $\cup_{i=1}^l B_i\subseteq M\subseteq\cap_{i=1}^k A_i$ and $M$ minimal with this property. Then every $X\in \mathcal G$ is comparable to $M$.
\end{lemma}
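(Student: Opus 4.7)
I would argue by contradiction. Suppose some $X\in\mathcal G$ is incomparable to $M$. Since $M\subseteq A_j$ for every $j$ and $B_s\subseteq M$ for every $s$, the set $M$ is comparable to every maximal and every minimal element of $\mathcal G$, so $X$ is neither maximal nor minimal in $\mathcal G$. Taking a maximal chain of $\mathcal G$ through $X$ then yields some minimal $B_j$ and some maximal $A_i$ of $\mathcal G$ with $B_j\subseteq X\subseteq A_i$. I split into cases according to whether every $B_s$ lies in $X$.

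\textbf{Case 1: $B_s\subseteq X$ for all $s$.} I apply Lemma~\ref{lem:complete bip graph has a midpoint} with minimal sets $B_1,\dots,B_l$ and maximal sets $\{M,X\}$; all the required containments hold, so there exists $M'\in\mathcal F$ with $\bigcup_{s=1}^l B_s\subseteq M'\subseteq M\cap X$. Since $M$ and $X$ are incomparable we have $M\cap X\subsetneq M$, hence $M'\subsetneq M$. Combined with $M'\subseteq M\subseteq\bigcap_{j=1}^k A_j$, this produces a strictly smaller element of $\mathcal F$ satisfying the midpoint condition, contradicting the minimality of $M$.

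\textbf{Case 2: $B_s\not\subseteq X$ for some $s$.} Then $s\neq j$, forcing $l\geq 2$. I claim the four sets $B_s,M,B_j,X$ form an induced copy of $\mathcal N$ in $\mathcal F$ with Hasse path $B_s\subset M\supset B_j\subset X$, contradicting $\mathcal N$-saturation. The three containments $B_s\subsetneq M$, $B_j\subsetneq M$, and $B_j\subsetneq X$ follow immediately from $\bigcup_i B_i\subseteq M$ and $B_j\subseteq X$. The three required incomparabilities all hold: $B_s$ and $B_j$ are distinct minimals of $\mathcal G$; $M$ and $X$ are incomparable by hypothesis; and $B_s,X$ are incomparable, since $B_s\not\subseteq X$ by hypothesis while $X\supseteq B_j\not\subseteq B_s$ precludes $X\subseteq B_s$. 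Finally, the four sets are pairwise distinct: the incomparabilities handle most pairs, and $M=B_t$ for any minimal $B_t$ would force $\bigcup_i B_i\subseteq B_t$ and hence $l=1$, contradicting $l\geq 2$.

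The main technical point is the bookkeeping in Case 2 — simultaneously verifying the three non-trivial incomparabilities and ruling out degenerate coincidences among the four sets. Once this is done, the induced $\mathcal N$ is immediate; the case split itself is forced by what Lemma~\ref{lem:complete bip graph has a midpoint} does and does not give us when $X\cap M\supseteq\bigcup_i B_i$.
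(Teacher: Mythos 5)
Your proof is correct and follows essentially the same route as the paper: split on whether $\cup_{s=1}^l B_s\subseteq X$, use Lemma~\ref{lem:complete bip graph has a midpoint} with $\{M,X\}$ on top to contradict the minimality of $M$ in the first case, and build an induced $\mathcal N$ from $M$, $X$ and two minimal elements of $\mathcal G$ in the second. Your extra bookkeeping on distinctness and incomparability is fine and matches the paper's argument in substance.
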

\begin{proof}
Suppose there exists $X\in \mathcal G$  such that $M$ and $X$ are incomparable. Since $\cup_{i=1}^l B_i\subseteq M$, then either $\cup_{i=1}^l B_i \subseteq X$, or $\cup_{i=1}^{l}B_i$ and $X$ are incomparable. In the former case, by Lemma~\ref{lem:complete bip graph has a midpoint}, there exists $M'$ such that $B_1, \dots, B_l$ are subsets of $M'$, and $M'$ is a subset of both $X$ and $M$, contradicting the minimality of $M$. In the latter case, there exists $B_j$ such that $B_j$ and $X$ are incomparable for some $1\leq j\leq l$. Furthermore, since $X \in \mathcal G$, there exists a minimal element comparable to it. Without loss of generality, let $B_1 \subset X$, which also implies that $j\neq1$. Then $B_1, X, M, B_j$ form an $\cN$, a contradiction.
\end{proof}
By symmetry, we of course have the following.
\begin{lemma}\label{lem:max/min midpoint comparable to everything2} Let $\mathcal G$ be a component of $\mathcal F$. Let $B_1,\dots, B_l$ be the minimal elements, and $A_1,\dots,A_k$ be the maximal elements of $\mathcal G$. Let $M\in\mathcal F$ be such that $\cup_{i=1}^l B_i\subseteq M\subseteq\cap_{i=1}^k A_i$ and $M$ maximal with this property. Then every $X\in \mathcal G$ is comparable to $M$.
\end{lemma}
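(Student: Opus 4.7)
The plan is to derive Lemma~\ref{lem:max/min midpoint comparable to everything2} from Lemma~\ref{lem:max/min midpoint comparable to everything} via the natural complementation symmetry. The key observation is that the Hasse diagram of $\mathcal N$ is invariant under flipping top-to-bottom, i.e.\ $\mathcal N$ is self-dual. Consequently, if $\mathcal F$ is $\mathcal N$-saturated then so is the complemented family $\mathcal F^c := \{[n]\setminus S : S\in\mathcal F\}$, because an induced copy of $\mathcal N$ in $\mathcal F^c\cup\{[n]\setminus X\}$ corresponds, by taking complements, to an induced copy of $\mathcal N$ in $\mathcal F\cup\{X\}$, and vice versa.

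Next I would verify that complementation preserves the component structure. Two sets $S, T\in\mathcal F\setminus\{\emptyset,[n]\}$ are joined by an edge in the Hasse diagram exactly when $[n]\setminus S$ and $[n]\setminus T$ are joined by an edge in the Hasse diagram of $\mathcal F^c\setminus\{\emptyset,[n]\}$, so the component $\mathcal G$ corresponds bijectively to the component $\mathcal G^c := \{[n]\setminus X : X\in\mathcal G\}$. Under this correspondence, the maximal elements $A_1,\dots,A_k$ of $\mathcal G$ become the minimal elements $B_i' := [n]\setminus A_i$ of $\mathcal G^c$, and the minimal elements $B_1,\dots,B_l$ become the maximal elements $A_j' := [n]\setminus B_j$. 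Moreover, the containment $\cup_{i=1}^l B_i \subseteq M \subseteq \cap_{i=1}^k A_i$ translates precisely into $\cup_{i=1}^k B_i' \subseteq [n]\setminus M \subseteq \cap_{j=1}^l A_j'$, and $M$ is \emph{maximal} with the original property if and only if $[n]\setminus M$ is \emph{minimal} with the translated property.

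Finally I would apply Lemma~\ref{lem:max/min midpoint comparable to everything} to the $\mathcal N$-saturated family $\mathcal F^c$, the component $\mathcal G^c$, and the minimal midpoint $[n]\setminus M$, concluding that every $Y\in\mathcal G^c$ is comparable to $[n]\setminus M$. Since $X$ and $M$ are comparable in $\mathcal F$ if and only if $[n]\setminus X$ and $[n]\setminus M$ are comparable in $\mathcal F^c$, this gives that every $X\in\mathcal G$ is comparable to $M$, as required. There is no real obstacle beyond the bookkeeping of checking that complementation preserves $\mathcal N$-saturation, component structure, and the max/min role swap, all of which follow immediately from the self-duality of $\mathcal N$.
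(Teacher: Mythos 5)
Your proposal is correct and coincides with the paper's own argument: the paper dispatches this lemma with the single line ``by symmetry,'' meaning precisely the complementation symmetry (self-duality of $\mathcal N$, preservation of saturation, components, and the swap of maximal/minimal roles and of minimal/maximal midpoints) that you spell out. Your write-up simply makes that bookkeeping explicit, and it is all accurate.
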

	
\section{The setup and preliminary lemmas}
Let $\mathcal F$ be an $\mathcal N$-saturated family with ground set $[n]$, and $\mathcal G$ a component of $\mathcal F$. Let $M\in\mathcal F$ be a set less than all maximal elements and greater than all minimal elements of $\mathcal G$, which exists by Lemma~\ref{lem:complete bip graph has a midpoint}, and is minimal with this property. By Lemma~\ref{lem:max/min midpoint comparable to everything} every element of $\mathcal G$ is comparable to $M$. In this section we explore some properties of $\mathcal F$ that will be heavily used later in the paper.
\begin{lemma}\label{lem:simplified case analysis} Suppose $A'\notin\mathcal F$ is a set for which there exists $A\in\mathcal F$ such that $A'\subset A$, and $A'$ is a maximal element of an $\mathcal N$ in $\mathcal F\cup\{A'\}$. Then we may assume that the other maximal element is also a subset of $A$.
\end{lemma}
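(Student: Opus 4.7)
The plan is to case-split on which role $A'$ plays in the given $\mathcal{N}$. Label the $\mathcal{N}$ so that the maximal elements are $A'$ and $T$, the minimal elements are $B_1, B_2$, and $B_2$ is the minimal element comparable to both maximals (so $B_1 \not\subseteq T$ or $B_1 \not\subseteq A'$ depending on the case). There are two possibilities: either $A'$ is the maximal element comparable to both $B_1, B_2$, or $T$ is. In each case, I will try to show directly that $T \subseteq A$; and if not, I will upgrade the configuration to a butterfly and apply Corollary~\ref{cor:butterfly} to produce a substitute for $T$.

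\textbf{Case 1: $A'$ is comparable to both $B_1, B_2$.} Here $A' \supset B_1, B_2$, while $T \supset B_2$, and $T$ is incomparable to $A'$ and to $B_1$. Suppose $T \not\subseteq A$. We cannot have $A \subseteq T$, for then $A' \subset A \subseteq T$ would contradict $A'$ and $T$ being incomparable; hence $A$ and $T$ are incomparable. But then $A \supset A' \supset B_1, B_2$ and $T \supset B_2$ with $T$ incomparable to $A$ and $B_1$, so $A, T, B_1, B_2$ form an $\mathcal{N}$ already in $\mathcal{F}$ — a contradiction. Thus $T \subseteq A$ automatically, and the lemma holds in this case.

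\textbf{Case 2: $T$ is comparable to both $B_1, B_2$.} Here $T \supset B_1, B_2$, $A' \supset B_2$, and $A'$ is incomparable to $T$ and to $B_1$. If $T \subseteq A$ we are done, so assume not. As in Case 1, $A \not\subseteq T$, so $A$ and $T$ are incomparable. Next I will show $B_1 \subseteq A$: if not, then $A \not\subseteq B_1$ (else $A \subseteq B_1 \subseteq T$, contradicting $A$ and $T$ incomparable), so $A$ and $B_1$ are incomparable; combined with $A \supset B_2$, $T \supset B_1, B_2$, $A$ incomparable to $T$, this makes $A, T, B_1, B_2$ an $\mathcal{N}$ in $\mathcal{F}$ — a contradiction. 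Hence $B_1 \subseteq A$, and $A, T, B_1, B_2$ is a butterfly in $\mathcal{F}$.

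Now I will use Corollary~\ref{cor:butterfly} to replace $T$. It gives some $M \in \mathcal{F}$ with $B_1 \cup B_2 \subseteq M \subseteq A \cap T$. I need $M$ and $A'$ to be incomparable: if $A' \subseteq M$, then $A' \subseteq M \subseteq T$ contradicts $A'$ incomparable to $T$; if $M \subseteq A'$, then $B_1 \subseteq M \subseteq A'$ contradicts $A'$ incomparable to $B_1$. Therefore $M$ and $A'$ are incomparable, and $M, A', B_1, B_2$ form an $\mathcal{N}$ in $\mathcal{F} \cup \{A'\}$ along the path $B_1 \subset M \supset B_2 \subset A'$, with the other maximal element $M$ satisfying $M \subseteq A$, as desired. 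The main obstacle is the middle step of Case 2, namely establishing $B_1 \subseteq A$ so that the given $\mathcal{N}$ can be promoted to a butterfly that is then fed into Corollary~\ref{cor:butterfly}.
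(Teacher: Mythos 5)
Your proof is correct and follows essentially the same route as the paper: the same case split on whether $A'$ is the maximal element comparable to both minimals, the same $\mathcal N$-in-$\mathcal F$ contradiction forcing $T\subseteq A$ in the first case, and in the second case the same promotion to a butterfly followed by Corollary~\ref{cor:butterfly}, with the midpoint $M$ shown incomparable to $A'$ and substituted as the new maximal element inside $A$.
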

\begin{proof} Since $A' \notin \cF$ is a maximal element of some $\mathcal N$ in $\mathcal F\cup\{A'\}$, then we either have figure (i), or figure (ii) depicted below, for some $X,Y,Z,X',Y',Z'\in\mathcal F$.
\begin{figure}[H]
\begin{minipage}{0.45\textwidth}
\centering
\begin{tikzpicture}[scale=1]
\node at (0,0) (Si) {\textbf{$\bullet$}};
\node at (0,2) (A) {\textbf{$\bullet$}};
\node at (2,2) (C) {\textbf{$\bullet$}};
\node at (2,0) (B) {\textbf{$\bullet$}};
\node at (0,3) (S) {\textbf{$\bullet$}};
\node[left=1pt of A] {$A'$};
\node[left=1pt of Si] {$Y$};
\node[right=1pt of C] {$X$};
\node[right=1pt of B] {$Z$};
\node[left=1pt of S] {$A$};
\draw (A) -- (B);
\draw (B) -- (C);
\draw (A) -- (Si);
\draw (S) -- (A);
\end{tikzpicture}
\caption*{(i)}
\end{minipage}
\hfill
\begin{minipage}{0.45\textwidth}
\centering
\begin{tikzpicture}[scale=1]
\node at (0,0) (Si) {\textbf{$\bullet$}};
\node at (0,2) (A) {\textbf{$\bullet$}};
\node at (2,2) (C) {\textbf{$\bullet$}};
\node at (2,0) (B) {\textbf{$\bullet$}};
\node at (2,3) (S) {\textbf{$\bullet$}};
\node[left=1pt of A] {$X'$};
\node[left=1pt of Si] {$Y'$};
\node[right=1pt of C] {$A'$};
\node[right=1pt of B] {$Z'$};
\node[right=1pt of S] {$A$};
\draw (A) -- (B);
\draw (B) -- (C);
\draw (A) -- (Si);
\draw (S) -- (C);
\end{tikzpicture}
\caption*{(ii)}
\end{minipage}
\end{figure}

In the first case (figure (i)), $A,X,Y, Z$ cannot form an $\mathcal N$, as they are all elements of $\mathcal F$. Thus $A$ and $X$ must be comparable. Since $A'\subset A$ and $A'$ and $X$ are incomparable, we must have $X\subset A$, as desired.
		
In the second case (figure (ii)), in order for $A,X',Y',Z'$ to not form an $\mathcal N$ in $\mathcal F$, $A$ must be comparable to $X'$ or $Y'$. If $X'$ and $A$ are comparable, as before, we must have $X'\subset A$. If $X'$ and $A$ are incomparable, then $Y'$ and $A$ are comparable, and consequently $Y'\subset A$.  This produces a butterfly in $\mathcal F$, namely $X',Y',Z',A$. By Corollary~\ref{cor:butterfly}, there exists $M'\in\mathcal F$ such that $Y',Z'\subset M'\subset X',A$. We observe now that $M'$ is incomparable to $A'$. Indeed, if $M' \subseteq A'$, then $Y' \subset A'$, a contradiction, and if $A' \subset M'$, then $A' \subset X'$, a contradiction. Therefore $M',A',Y',Z'$ form an $\mathcal N$ in which the other maximal element is $M'$, a subset of $A$, as desired.
\end{proof}
By symmetry, we also have the following lemma.
\begin{lemma}\label{lem:simplified case analysis2} Suppose $B'\notin\mathcal F$ is a set for which there exists $B\in\mathcal F$ such that $B\subset B'$, and $B'$ is a minimal element of an $\mathcal N$ in $\mathcal F\cup\{B'\}$. Then we may assume that the other minimal element also contains $B$ as a subset.
\end{lemma}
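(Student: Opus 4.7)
The plan is to reduce this statement to Lemma~\ref{lem:simplified case analysis} via the natural order-reversing symmetry of the problem. The poset $\mathcal{N}$ is self-dual: rotating its Hasse diagram by $180^\circ$ produces an isomorphic poset. Consequently, if $\mathcal{F}$ is $\mathcal{N}$-saturated with ground set $[n]$, then the complemented family $\mathcal{F}^{\mathrm{c}} = \{[n]\setminus X : X \in \mathcal{F}\}$ is also $\mathcal{N}$-saturated, and the complement bijection $X \mapsto [n]\setminus X$ reverses inclusion, sends minimal elements of any induced $\mathcal{N}$ to maximal elements, and vice versa.

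First I would set $A := [n]\setminus B \in \mathcal{F}^{\mathrm{c}}$ and $A' := [n]\setminus B' \notin \mathcal{F}^{\mathrm{c}}$. Since $B \subset B'$, we have $A' \subset A$. Moreover, since $B'$ is a minimal element of some induced copy of $\mathcal{N}$ in $\mathcal{F}\cup\{B'\}$, the complement $A'$ is a maximal element of the complemented induced $\mathcal{N}$ in $\mathcal{F}^{\mathrm{c}}\cup\{A'\}$. Thus the hypotheses of Lemma~\ref{lem:simplified case analysis} are satisfied by $A'$, $A$, and $\mathcal{F}^{\mathrm{c}}$.

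Next I would apply Lemma~\ref{lem:simplified case analysis} to conclude that we may take the other maximal element of the $\mathcal{N}$ in $\mathcal{F}^{\mathrm{c}}\cup\{A'\}$ to be a subset of $A$. Translating this back through the complement map, the other minimal element $M$ of the original $\mathcal{N}$ in $\mathcal{F}\cup\{B'\}$ satisfies $[n]\setminus M \subseteq [n]\setminus B$, that is, $B \subseteq M$, which is exactly the conclusion sought.

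There is no real obstacle here: the only items to verify are (a) self-duality of $\mathcal{N}$, which is visually obvious from its Hasse diagram, and (b) that complementation preserves $\mathcal{N}$-saturatedness, which follows immediately from (a) together with the definition. Alternatively, one could mirror the entire case analysis of Lemma~\ref{lem:simplified case analysis}, using Corollary~\ref{cor:butterfly} in the subcase where complementation produces a butterfly, but the symmetry route is cleaner and avoids repetition.
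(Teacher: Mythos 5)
Your proposal is correct and matches the paper's approach: the paper derives Lemma~\ref{lem:simplified case analysis2} from Lemma~\ref{lem:simplified case analysis} simply "by symmetry," which is exactly the complementation argument you spell out (self-duality of $\mathcal N$, hence $\{[n]\setminus X : X\in\mathcal F\}$ is $\mathcal N$-saturated, and inclusion and the roles of minimal/maximal elements are reversed). Your write-up just makes that symmetry explicit.
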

\begin{lemma}\label{lem:when MU{i} is maximal in N} Let $M$ be as above. Suppose $i \notin M$ and $M \cup \{i\} \notin \mathcal F$. If $M\cup\{i\}$ is a maximal element of an $\mathcal N$ in $\mathcal F\cup\{M\cup\{i\}\}$, then we may assume that it is the maximal comparable to both minimal elements, and that one of the minimal elements is $M$. Moreover, there are at most $|\mathcal F|-2$ such singletons.
\end{lemma}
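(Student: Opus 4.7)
The proof has two parts. First, I would establish the structural statement: any $\mathcal N$ in $\mathcal F\cup\{M\cup\{i\}\}$ with $M\cup\{i\}$ as a maximal can be replaced by one in which $M\cup\{i\}$ is the maximal comparable to both minimal elements and $M$ is one of those minimals. Second, I would deduce the counting bound from this structural form via an injection.

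For the structural part, the key observation is that no set in $\mathcal F$ lies strictly between $M$ and $M\cup\{i\}$, since $M\cup\{i\}$ differs from $M$ by the single element $i$. I would take an arbitrary $\mathcal N$ $\{X,Y,Z,M\cup\{i\}\}$ in $\mathcal F\cup\{M\cup\{i\}\}$ with $M\cup\{i\}$ a maximal and perform case analysis, following the template of Lemma~\ref{lem:positions of intersections/unions in N}. The main case is when $M\cup\{i\}$ is comparable to only one minimal in the $\mathcal N$ (with $X$ the minimum comparable to both maximals, $Y$ the minimum comparable only to the other maximal $Z$). Here I would use $M\subsetneq M\cup\{i\}$ as the analogue of the summand $B_j$ in that lemma. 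If $M$ is incomparable to $Z$, then as in Lemma~\ref{lem:positions of intersections/unions in N} one deduces that $M$ is incomparable to both $X$ and $Y$ (using $X,Y\subseteq Z$ together with $M$ incomparable to $Z$, and $Y$ incomparable to $M\cup\{i\}$ to rule out $Y\subseteq M$), and $\{X,M,Z,M\cup\{i\}\}$ becomes the desired $\mathcal N$ with $M\cup\{i\}$ comparable to both minimals and $M$ a minimal. If instead $M\subseteq Z$, then $\{M,X,Z,M\cup\{i\}\}$ is a butterfly whenever $M$ is incomparable to $X$; Corollary~\ref{cor:butterfly} then supplies a midpoint forced to equal $M$ by the no-strict-intermediate observation, yielding a contradiction, and the remaining subcases ($X=M$ or $X\subsetneq M$) are resolved by a similar butterfly-and-midpoint argument. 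The case where $M\cup\{i\}$ is already the maximal comparable to both minimals is handled by a parallel replacement argument swapping in $M$ for one of the existing minimals, again using that no strict intermediate between $M$ and $M\cup\{i\}$ can exist.

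The main technical obstacle is organising the subcases where $M\cup\{i\}$ is only comparable to one minimal and $M\subseteq Z$, which requires combining the no-strict-intermediate observation with Corollary~\ref{cor:butterfly} to extract the configuration. Once the structural claim is in place, the counting is straightforward. For each such $i$, fix an $\mathcal N$ $\{M,X_i,Z_i,M\cup\{i\}\}$ with $M\cup\{i\}$ comparable to both minimals and $M$ a minimal. The other minimal $X_i\in\mathcal F$ is incomparable to $M$ and satisfies $X_i\subseteq M\cup\{i\}$, so $X_i\not\subseteq M$ forces $i\in X_i$. If $X_i=X_j$ for distinct $i\neq j$, then $j\in X_j=X_i\subseteq M\cup\{i\}$ gives $j\in M$, contradicting $j\notin M$. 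Hence $i\mapsto X_i$ is an injection into $\mathcal F$. Since $X_i\neq M$ (incomparable to $M$) and $X_i\neq\emptyset$ (contains $i$), the image lies in $\mathcal F\setminus\{M,\emptyset\}$, giving at most $|\mathcal F|-2$ such singletons.
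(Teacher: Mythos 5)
Your counting argument at the end is fine and is essentially the paper's own (the other minimal must contain $i$, giving an injection into $\mathcal F$ avoiding two sets). The structural half, however, has genuine gaps, and they all trace back to one missing idea: the paper's proof of this lemma repeatedly uses the special property of $M$ from Lemma~\ref{lem:max/min midpoint comparable to everything} -- every set in the component $\mathcal G$ is comparable to $M$ -- and your proposal never invokes it, while the substitutes you offer do not close the cases. Concretely: (1) in your main case with $M$ incomparable to $Z$, the claim that $M$ is incomparable to $X$ does not follow from what you cite; $X\subseteq Z$ together with $M$ incomparable to $Z$ only rules out $M\subseteq X$, not $X\subsetneq M$ (that subcase can be excluded, e.g.\ because $X,Y,Z,M$ would then form an induced $\mathcal N$ inside $\mathcal F$, but some argument is needed and you give none). (2) In the case $M\subseteq Z$ you apply Corollary~\ref{cor:butterfly} to the ``butterfly'' $\{M,X,Z,M\cup\{i\}\}$, but that corollary requires all four sets to lie in $\mathcal F$, and $M\cup\{i\}\notin\mathcal F$ by hypothesis, so the application is invalid. (Incidentally this subcase is vacuous for a much simpler reason: $M$ incomparable to $X$ and $X\subset M\cup\{i\}$ force $i\in X\subset Z$, hence $M\cup\{i\}\subseteq Z$, contradicting that $Z$ and $M\cup\{i\}$ are the two incomparable maximals.) (3) The ``remaining subcases ($X=M$ or $X\subsetneq M$)'' cannot be resolved by ``a similar butterfly-and-midpoint argument'': the only sets in play are $X,Y,Z\in\mathcal F$ and $M\cup\{i\}\notin\mathcal F$, so no butterfly inside $\mathcal F$ is available. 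These are exactly the subcases where the paper needs Lemma~\ref{lem:max/min midpoint comparable to everything}; for instance if $X=M$ then $Y\subset Z\supset M$ places $Y$ in $\mathcal G$, yet $Y$ is incomparable to $M$, a contradiction that no local set-theoretic or midpoint argument detects.

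The same problem affects the case you dismiss as ``a parallel replacement argument'', namely when $M\cup\{i\}$ is already the maximal comparable to both minimals. There the paper again uses Lemma~\ref{lem:max/min midpoint comparable to everything}: when one of the other sets of the $\mathcal N$ is comparable to $M$, comparability of the whole component with $M$ forces the configuration $C\subset M\subset A$, and only then can one verify that the remaining minimal $B$ is incomparable to $M$ and assemble the desired $\mathcal N$ with minimals $M$ and $B$. Your ``no strict intermediate between $M$ and $M\cup\{i\}$'' observation is true but does almost no work here and cannot replace that input. As it stands, the structural claim of the lemma is not established; to repair the proof you must bring the component-comparability property of $M$ (Lemma~\ref{lem:max/min midpoint comparable to everything}) into the case analysis, as the paper does.
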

\begin{proof}
Since $M \cup \{i\}\notin\mathcal F$, then it must form an $\mathcal N$ when added to $\mathcal F$. If it is a maximal element of such an $\mathcal N$ then one of the two cases below holds.
		
Case 1. $M\cup\{i\}$ is comparable to both minimal elements of $\mathcal N$, as illustrated below, where $A_i, B_i, C_i\in\mathcal F$.
		
\begin{center}
\begin{tikzpicture}[scale=1]
\node at (0,2) (Si) {\textbf{$\bullet$}};
\node at (2,2) (A) {\textbf{$\bullet$}};
\node at (0,0) (B) {\textbf{$\bullet$}};
\node at (2,0) (C) {\textbf{$\bullet$}};
\node at (-1,1) (S) {\textbf{$\bullet$}};
\node[right=1pt of A] {$A_i$};
\node[left=1pt of B] {$B_i$};
\node[right=1pt of C] {$C_i$};
\node[left=1pt of Si] {$M \cup \{i\}$};
\node[left=1pt of S] {$M$};
\draw (A) -- (C);
\draw (B) -- (Si);
\draw (C) -- (Si);
\draw (S) -- (Si);
\end{tikzpicture}
\end{center}
If $C_i=M$, or $B_i=M$, then we are done. Therefore, we may assume that $B_i\neq M$ and $C_i\neq M$. Now, if both $A_i$ and $C_i$ are incomparable to $M$, then $M\cup \{i\}, M, A_i, C_i$ form an $\mathcal N$ where one of the minimal elements is $M$, as desired. Therefore, we may assume that at least one of $A_i$ or $C_i$ is comparable to $M$, which implies that $A_i, C_i, M$ are in the same component of $\mathcal F$. By Lemma~\ref{lem:max/min midpoint comparable to everything}, both $A_i, C_i$ are comparable to $M$. Since $A_i$ and $M\cup\{i\}$ are incomparable, we must have $M\subset A_i$. Also, we cannot have $M\subseteq C_i$ as $C_i\subset M\cup\{i\}$ and $C_i\neq M$. Thus $C_i \subset M \subset A_i$.

Finally, we observe that $B_i$ and $M$ are incomparable. Indeed, by cardinality we cannot have $M\subset B_i$, and if $B_i\subset M$ then $B_i\subset A_i$, a contradiction. Therefore $M\cup \{i\}, M, A_i, B_i$ form the desired $\mathcal N$.
		
Case 2. $M\cup\{i\}$ is comparable to exactly one minimal element of $\mathcal N$, as illustrated below, where $A_i, B_i, C_i\in\mathcal F$.
\begin{center}
\begin{tikzpicture}[scale=1]
\node at (0,0) (B) {\textbf{$\bullet$}};
\node at (0,2) (A) {\textbf{$\bullet$}};
\node at (2,2) (Si) {\textbf{$\bullet$}};
\node at (2,0) (C) {\textbf{$\bullet$}};
\node at (3,1) (S) {\textbf{$\bullet$}};
\node[left=1pt of A] {$A_i$};
\node[left=1pt of B] {$B_i$};
\node[right=1pt of C] {$C_i$};
\node[right=1pt of Si] {$M \cup \{i\}$};
\node[right=1pt of S] {$M$};
\draw (A) -- (B);
\draw (Si) -- (C);
\draw (A) -- (C);
\draw (S) -- (Si);
\end{tikzpicture}
\end{center}
If $C_i=M$, then $A_i, B_i, M$ are in the same component of $\mathcal F$. However, $B_i$ is incomparable to $C_i = M$, which contradicts Lemma~\ref{lem:max/min midpoint comparable to everything}. Therefore $C_i \neq M$. 
		
If both $A_i$ and $C_i$ are incomparable to $M$, then $M\cup \{i\}, M, A_i, C_i$ form the desired $\mathcal N$, where $M\cup\{i\}$ is the maximal comparable to both minimal elements, and $M$ is one of the minimal elements. Therefore, we may assume that at least one of $A_i$ or $C_i$ is comparable to $M$. This now means that $A_i, B_i, C_i, M$ are in the same component of $\mathcal F$, thus, by Lemma~\ref{lem:max/min midpoint comparable to everything}, $A_i, B_i, C_i$ are all comparable to $M$.  Since $B_i$ is incomparable to $M \cup \{i\}$, we must have $M \subset B_i$. Moreover, since $C_i \subset M \cup \{i\}$ and $C_i\neq M$, we must have $C_i\subset M$. But this implies that $C_i \subset M \subset B_i$, a contradiction.
		
For the second part of the lemma, pick $X_i$ to be the other minimal element (distinct from $M$) of the $\mathcal N$ in $\mathcal F\cup\{ M\cup\{i\}\}$ established earlier. We therefore have $X_i\subset M\cup\{i\}$, and $M$ and $X_i$ are incomparable, which implies that $i\in X_i$. Thus, $X_i\neq X_j$ if $i \neq j$, as otherwise $i \in X_i = X_j$, which implies that $i\in M\cup \{j\}$, and so $i\in M$, a contradiction. Moreover, $X_i\notin\{\emptyset, [n]\}$ as it is incomparable to $M$. Putting everything together we get that there are at most $|\mathcal F|-2$ singletons $i$ such that $i\notin M$, and $M \cup \{i\}\notin \mathcal F$ is a maximal element of an $\mathcal N$ in $\mathcal F\cup\{M\cup\{i\}\}$.
\end{proof}
The following lemma allows us to assume that the set $M$ has relatively small cardinality.
\begin{lemma}\label{lem:wlog F satisfies certain conditions} There exists an $\mathcal N$-saturated family $\mathcal F'$ with ground set $[n]$ such that $|\mathcal F| = |\mathcal F'|$, and at least one of its components $\mathcal G'$ has a set of size at most $n/2$ that is comparable to all elements of the component.
\end{lemma}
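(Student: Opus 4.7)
My plan is to exploit the self-duality of $\mathcal N$ via complementation on $\mathcal P([n])$. Fix any component $\mathcal G$ of $\mathcal F$, say with minimal elements $B_1,\dots,B_l$ and maximal elements $A_1,\dots,A_k$. By Lemma~\ref{lem:max/min midpoint comparable to everything} there is a minimal midpoint $M$ of $\mathcal G$ (minimal with respect to inclusion among sets $X\in\mathcal F$ with $\cup B_i\subseteq X\subseteq\cap A_j$) that is comparable to every element of $\mathcal G$; by Lemma~\ref{lem:max/min midpoint comparable to everything2} there is also a maximal such midpoint $M'$ comparable to every element of $\mathcal G$. Both $M$ and $M'$ lie in $\mathcal G$, since they are sandwiched strictly between some $B_i$ and some $A_j$ and in particular differ from $\emptyset$ and $[n]$. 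Applying the comparability property of $M$ to the element $M'$ shows that $M$ and $M'$ are comparable, and minimality of $M$ then forces $M\subseteq M'$, so $|M|\leq|M'|$.

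If $|M|\leq n/2$, take $\mathcal F'=\mathcal F$ with $\mathcal G'=\mathcal G$, and we are done with $M$ as the required set. Otherwise $|M'|\geq|M|>n/2$, so $|[n]\setminus M'|<n/2$. In this case set $\mathcal F'=\{[n]\setminus S : S\in\mathcal F\}$, which satisfies $|\mathcal F'|=|\mathcal F|$. Since the Hasse diagram of $\mathcal N$ is invariant (up to relabelling) under reversing all edges, the map $S\mapsto[n]\setminus S$ sends induced copies of $\mathcal N$ in any family to induced copies of $\mathcal N$ in its complement family; applying this to $\mathcal F$ and to each $\mathcal F\cup\{S\}$ for $S\notin\mathcal F$ shows that $\mathcal F'$ is $\mathcal N$-saturated. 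Moreover, complementation preserves comparability and hence the connected components of the Hasse diagram, so $\mathcal G'=\{[n]\setminus X : X\in\mathcal G\}$ is a component of $\mathcal F'$. Since $M'$ is comparable to every element of $\mathcal G$, the set $[n]\setminus M'$ is comparable to every element of $\mathcal G'$, and its size is at most $n/2$.

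The only point needing attention is the verification that complementation preserves $\mathcal N$-saturation, but this is immediate from the self-duality of $\mathcal N$ and is essentially the same symmetry already used in the proof of Lemma~\ref{lem:positions of intersections/unions in N}. I do not anticipate any significant obstacle.
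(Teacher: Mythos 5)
Your proof is correct and follows essentially the same route as the paper: branch on whether the minimal midpoint $M$ of a component has size at most $n/2$, and otherwise pass to the complement family $\{[n]\setminus X : X\in\mathcal F\}$, using the self-duality of $\mathcal N$ to preserve saturation and components. The only (harmless) difference is that you exhibit the small set in $\mathcal G'$ as the complement of the maximal midpoint $M'\supseteq M$ of $\mathcal G$, whereas the paper re-extracts a minimal midpoint of $\mathcal G'$ and notes it is contained in $[n]\setminus M$.
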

\begin{proof}
Let $\mathcal G$ be a component of $\mathcal F$. If the minimal set $M$ that is between the maximal and the minimal elements of $\mathcal G$ satisfies $|M|\leq n/2$, we are done. Otherwise, let $\mathcal F' = \{[n]\setminus X: X \in \mathcal F\}$. Since $\mathcal N$ is invariant under taking complements, $\mathcal F'$ is an $\mathcal N$-saturated family with $\mathcal G' = \{[n]\setminus X: X \in \mathcal G\}$ as one of its components. Moreover, $|\mathcal F|=|\mathcal F'|$. Let $M'$ be a minimal set comparable to all sets of $\mathcal G'$, which exists by Lemma~\ref{lem:max/min midpoint comparable to everything}. Therefore $M'\subseteq [n]\setminus M$, thus $|M'| \leq |[n]\setminus M|<n/2$, as desired.
\end{proof}
\section{Proof of the main result}
We are now ready to prove that the saturation number of $\mathcal N$ is at least linear. The main part of the proof is in fact Proposition~\ref{prop:main result}, which easily implies the linear lower bound in Theorem~\ref{thm:lower bound on saturation number}.
\begin{proposition}\label{prop:main result} Let $\mathcal F$ be an $\mathcal N$-saturated family with ground set $[n]$. Let $\mathcal G$ be a component of $\mathcal F$ such that there exists $M\in\mathcal F$ minimal with the property that it is comparable to all sets of $\mathcal G$, and $|M|\leq n/2$. Then, for at least $n/2 -|\mathcal F| + 2$ singletons $i \in [n]$, there exists $S_i$ such that $i\notin S_i$ and $S_i,S_i \cup \{i\}\in \mathcal F$.
\end{proposition}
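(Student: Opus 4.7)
Since $|M| \leq n/2$, there are at least $n/2$ singletons $i \in [n] \setminus M$. I partition these into three classes according to how $M \cup \{i\}$ interacts with $\mathcal F$:
\[
A = \{i \notin M : M \cup \{i\} \in \mathcal F\}, \qquad
B = \{i \notin M : M \cup \{i\} \notin \mathcal F \text{ is a max of some } \mathcal N\},
\]
and $C$ is the remainder; for $i \in C$, saturation forces $M \cup \{i\}$ to appear only as a minimal element of any $\mathcal N$ it generates. Lemma~\ref{lem:when MU{i} is maximal in N} immediately gives $|B| \leq |\mathcal F| - 2$, while $i \in A$ yields the cover pair $S_i = M$. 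If I can further show every $i \in C$ also admits a cover pair, then
\[
|A| + |C| \geq (n - |M|) - |B| \geq \tfrac{n}{2} - (|\mathcal F| - 2) = \tfrac{n}{2} - |\mathcal F| + 2,
\]
which is exactly what we need.

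Fix $i \in C$. Combining Lemma~\ref{lem:positions of intersections/unions in N} with Lemma~\ref{lem:simplified case analysis2}, I may assume that the $\mathcal N$ in $\mathcal F \cup \{M \cup \{i\}\}$ has minimal elements $M \cup \{i\}$ (central, comparable to both maxes) and some $N_i \in \mathcal F$ with $M \subsetneq N_i$, and maximal elements $Y, Z \in \mathcal F$ with $Z$ central. So $N_i \subsetneq Z$, $i \notin N_i$, and $i \in Y \cap Z$. The natural cover-pair candidate is $(N_i, N_i \cup \{i\})$: if $N_i \cup \{i\} \in \mathcal F$ we are done, and if $N_i \cup \{i\} = Z$ then $N_i = Z \setminus \{i\}$ is in $\mathcal F$ and we are also done. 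Otherwise $N_i \cup \{i\} \notin \mathcal F$ and $N_i \cup \{i\} \subsetneq Z$, so adjoining it creates a new $\mathcal N$ whose shape is pinned down by Lemma~\ref{lem:simplified case analysis} (with $A = Z$, should $N_i \cup \{i\}$ be a max) or Lemma~\ref{lem:simplified case analysis2} (with $B = N_i$, should it be a min).

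In the min case the other min $N_i'$ is a strict superset of $N_i$ in $\mathcal F$ with $i \notin N_i'$, and I recurse with $N_i'$ in place of $N_i$. The finiteness of $\mathcal F$ forces the ascending chain $N_i \subsetneq N_i' \subsetneq N_i'' \subsetneq \cdots$ to terminate, and at termination $N_i^{(k)} \cup \{i\}$ is not a min of any $\mathcal N$ — so, since $N_i^{(k)} \cup \{i\} \notin \mathcal F$ by assumption of no cover pair, it must be a max of some $\mathcal N$. Lemma~\ref{lem:simplified case analysis} with an appropriate $A \supseteq N_i^{(k)} \cup \{i\}$ in $\mathcal F$ then places the other max $W$ inside $A$ and the two mins $P, Q$ inside $N_i^{(k)} \cup \{i\}$. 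If either of $P, Q$ contains $i$, the strictly smaller set $P \setminus \{i\}$ or $Q \setminus \{i\}$ is the next candidate for $S_i$, and by induction on set size the recursion lands in the base case $\{i\}$, yielding the cover pair $(\emptyset, \{i\})$. If neither of $P, Q$ contains $i$ so that $P, Q \subseteq N_i^{(k)}$, I plan to argue that $W$ must contain $N_i^{(k)}$ (extending the chain once more), for otherwise $W$ and $N_i^{(k)}$ would be incomparable and $P, Q, N_i^{(k)}, W$ would form a forbidden $\mathcal N$ inside $\mathcal F$ (central min $P$, side min $Q$, maxes $N_i^{(k)}$ and $W$) — a contradiction.

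The main obstacle is executing Case $C$ cleanly: the argument branches into an ascending chain of sets in $\mathcal F$ avoiding $i$ on one side and a descending chain of sets in $\mathcal F$ containing $i$ on the other, and I expect the delicate point to be bookkeeping a strictly monotone measure along both branches so that the recursion genuinely terminates with either a cover pair or a forbidden $\mathcal N$ in $\mathcal F$, while ensuring that the repeated applications of Lemmas~\ref{lem:simplified case analysis} and~\ref{lem:simplified case analysis2} stay consistent across steps.
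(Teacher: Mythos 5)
Your first reduction coincides with the paper's: split the singletons $i\notin M$ according to whether $M\cup\{i\}\in\mathcal F$, is forced to be a maximal element of some $\mathcal N$ (at most $|\mathcal F|-2$ of these, by Lemma~\ref{lem:when MU{i} is maximal in N}), or can only be a minimal element, and take $S_i=M$ in the first class. But the whole content of the proposition is your class $C$, and there the proposal has genuine gaps rather than a proof. First, you cannot ``assume the $\mathcal N$ has $M\cup\{i\}$ as the central minimal element'': Lemma~\ref{lem:positions of intersections/unions in N} applies only to unions/intersections of members of $\mathcal F$, and $M\cup\{i\}$ is not such a set; the paper must carry both configurations (its figures (1) and (2)) and eliminates the non-central one only \emph{after} producing a set $D$ with $M\cup\{i\}\subset D\subset B\cup\{i\}$ (its Claim B), which already requires the component-midpoint lemma and the butterfly corollary. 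Second, at the terminal step of your ascending chain you claim Lemma~\ref{lem:simplified case analysis} places \emph{both} minima $P,Q$ inside $N_i^{(k)}\cup\{i\}$; that lemma only controls the other maximal element and never lets you assume $N_i^{(k)}\cup\{i\}$ is the central maximum, so the case where only one minimum lies below it is untreated. Third, the branch ``if $P$ or $Q$ contains $i$, recurse downward and land in the base case $\{i\}$, yielding the pair $(\emptyset,\{i\})$'' is unfounded: $\{i\}$ need not belong to $\mathcal F$, and you give no inductive statement or monotone quantity that prevents the process from bouncing indefinitely between the ascending $i$-free branch and the descending $i$-containing branch --- indeed your final sub-case ends with ``extend the chain once more'' after the chain was supposedly terminated.

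This missing bookkeeping is precisely what the paper's proof supplies, and it is not a routine patch. The paper fixes an $\mathcal N$ through $M\cup\{i\}$ with $|B|$ \emph{maximal} and then $|A|$ \emph{minimal}, uses Lemma~\ref{lem:max/min midpoint comparable to everything} to force every relevant set into the component of $M$ and hence comparable to $M$, and then runs an induction constructing $A_0\supsetneq A_1\supsetneq\cdots\supset M\cup\{i\}$ via six interlocking claims; the crucial invariant (its Claims C and 5) is that no set of $\mathcal F$ lies strictly between $B_m$ and $A_m$ and that any proper superset of $B_m$ contains $A_m$, and every case of the analysis is closed by contradicting either the maximality of $|B_m|$ or the minimality of $|A_m|$, with repeated use of Corollary~\ref{cor:butterfly}. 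The contradiction is the strict descent of the $A_m$, all containing $M\cup\{i\}$. Your plan contains none of these extremal choices or structural invariants, and without them the individual steps of your recursion (in particular the ``$W\supset N_i^{(k)}$'' and ``$i\in P$ or $i\in Q$'' branches) do not close, so the proposal should be regarded as an outline of the easy reduction plus an unexecuted sketch of the hard part.
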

\begin{proof}
Since $|M|\leq n/2$, there exist at least $n/2$ singletons $i$ such that $i \notin M$. By Lemma~\ref{lem:when MU{i} is maximal in N}, there are at least $n/2-(|\mathcal F| -2)$ of singletons $i$ such that $i\notin M$, and either $M \cup \{i\} \in \mathcal F$, or $M \cup \{i\} \notin \mathcal F$ and it can only be a minimal element of an $\mathcal N$ in $\mathcal F\cup\{M\cup\{i\}\}$. We will show that for every such $i$, there exists $S_i$ such that $i\notin S_i$ and $S_i, S_i \cup \{i\}\in \mathcal F$, which will finish the proof.
		
Suppose for a contradiction that there exists such a singleton $i$ for which we cannot find two sets in $\mathcal F$ that differ exactly by $i$. This implies that $M \cup \{i\} \notin \mathcal F$, and by assumption, it can only be a minimal element of an $\mathcal N$ in $\mathcal F\cup\{M\cup\{i\}\}$, as illustrated below, where $A$ is the maximal element comparable to both minimal elements, and $B$ is the other minimal element of the $\mathcal N$.
\begin{figure}[H]
\begin{minipage}{0.50\textwidth}
\centering
\begin{tikzpicture}[scale=1]
\node at (0,0) (Si) {\textbf{$\bullet$}};
\node at (0,2) (A) {\textbf{$\bullet$}};
\node at (2,2) (C) {\textbf{$\bullet$}};
\node at (2,0) (B) {\textbf{$\bullet$}};
\node at (0,-1) (S) {\textbf{$\bullet$}};
\node[left=1pt of A] {$A$};
\node[left=1pt of Si] {$M \cup \{i\}$};
\node[right=1pt of C] {$C$};
\node[right=1pt of B] {$B$};
\node[left=1pt of S] {$M$};
\draw (A) -- (B);
\draw (B) -- (C);
\draw (A) -- (Si);
\draw (S) -- (Si);
\end{tikzpicture}
\caption*{(1)}
\end{minipage}
\hfill
\begin{minipage}{0.50\textwidth}
\centering
\begin{tikzpicture}[scale=1]
\node at (0,0) (B) {\textbf{$\bullet$}};
\node at (0,2) (A) {\textbf{$\bullet$}};
\node at (2,2) (C) {\textbf{$\bullet$}};
\node at (2,0) (Si) {\textbf{$\bullet$}};
\node at (2,-1) (S) {\textbf{$\bullet$}};
\node[left=1pt of A] {$A$};
\node[right=1pt of Si] {$M \cup \{i\}$};
\node[right=1pt of C] {$C$};
\node[left=1pt of B] {$B$};
\node[right=1pt of S] {$M$};
\draw (A) -- (B);
\draw (C) -- (Si);
\draw (A) -- (Si);
\draw (S) -- (Si);
\end{tikzpicture}
\caption*{(2)}
\end{minipage}
\end{figure}

Among all the possible $\mathcal N$ formed with $M \cup \{i\}$, we pick the one having $|B|$ maximal, and then under this choice of $B$, choose $|A|$ minimal. Since $M\subset M\cup\{i\}$, we have that $A, B, C$ are in the same component of $\mathcal F$ as $M$, therefore they are all comparable to $M$ by Lemma~\ref{lem:max/min midpoint comparable to everything}. This tells us that $M\subset B$. Since $M\cup\{i\}$ and $B$ are incomparable, we must have that $i\notin B$. Moreover, in case (1), $B\subset C$, and since $M\cup\{i\}$ and $C$ are incomparable, we also get that $i\notin C$. By our assumption, $B\cup\{i\}\notin\mathcal F$, thus it must form an $\mathcal N$ when added to $\mathcal F$.	
\begin{claim} $B \cup \{i\}$ cannot be a minimal element of any $\mathcal N$ in $\mathcal F\cup\{B\cup\{i\}\}$.
\end{claim}
\begin{proof} Suppose that $B\cup\{i\}$ is one of the minimal elements of an $\mathcal N$. Then one of the two cases depicted below holds, where $X,Y,Z, X',Y',Z'\in\mathcal F$.
\begin{figure}[H]
\begin{minipage}{0.45\textwidth}
\centering
\begin{tikzpicture}[scale=1]
\node at (0,0) (Si) {\textbf{$\bullet$}};
\node at (0,2) (A) {\textbf{$\bullet$}};
\node at (2,2) (C) {\textbf{$\bullet$}};
\node at (2,0) (B) {\textbf{$\bullet$}};
\node at (0,-1) (S) {\textbf{$\bullet$}};
\node[left=1pt of A] {$X$};
\node[left=1pt of Si] {$B \cup \{i\}$};
\node[right=1pt of C] {$Y$};
\node[right=1pt of B] {$Z$};
\node[left=1pt of S] {$B$};
\draw (A) -- (B);
\draw (B) -- (C);
\draw (A) -- (Si);
\draw (S) -- (Si);
\end{tikzpicture}
\end{minipage}
\hfill
\begin{minipage}{0.45\textwidth}
\centering
\begin{tikzpicture}[scale=1]
\node at (0,0) (B) {\textbf{$\bullet$}};
\node at (0,2) (A) {\textbf{$\bullet$}};
\node at (2,2) (C) {\textbf{$\bullet$}};
\node at (2,0) (Si) {\textbf{$\bullet$}};
\node at (2,-1) (S) {\textbf{$\bullet$}};
\node[left=1pt of A] {$X'$};
\node[right=1pt of Si] {$B \cup \{i\}$};
\node[right=1pt of C] {$Y'$};
\node[left=1pt of B] {$Z'$};
\node[right=1pt of S] {$B$};
\draw (A) -- (B);
\draw (C) -- (Si);
\draw (A) -- (Si);
\draw (S) -- (Si);
\end{tikzpicture}
\end{minipage}
\end{figure}

By Lemma~\ref{lem:simplified case analysis2} we may assume that $B \subset Z$ in the first case, and $B\subset Z'$ in the second case. Since $B \cup \{i\}$ incomparable to $Z$ (or $Z'$), we must have that $i \notin Z$ (or $i\notin Z')$. Since $M \subset B \subset Z$ (or $M\subset B\subset Z'$), we get that $M \cup \{i\}$ is incomparable to $Z$ (or $Z'$). This now implies that $M \cup \{i\}, X, Y, Z$ (or $M\cup\{i\}, X', Y', Z')$ form an $\mathcal N$ with $M \cup \{i\}, Z$ (or $Z'$) being the minimal elements, contradicting the maximality of $|B|$, replaced by $Z$ (or $Z'$).
\end{proof}
\begin{claim} There exists $D \in \mathcal F$ such that $M \cup \{i\} \subset D \subset B \cup \{i\}$.
\end{claim}
\begin{proof}
By Claim A, $B \cup \{i\}$ is always a maximal element of any $\mathcal N$ in $\mathcal F\cup\{B\cup\{i\}\}$. We therefore have two cases, depicted below, for some $X,Y,Z,X',Y',Z'\in\mathcal F$.
\begin{figure}[H]
\begin{minipage}{0.45\textwidth}
\centering
\begin{tikzpicture}[scale=1]
\node at (0,2) (Si) {\textbf{$\bullet$}};
\node at (2,2) (A) {\textbf{$\bullet$}};
\node at (0,0) (B) {\textbf{$\bullet$}};
\node at (2,0) (C) {\textbf{$\bullet$}};
\node at (0, 3) (AA)  {\textbf{$\bullet$}};
\node[right=1pt of A] {$X$};
\node[left=1pt of B] {$Y$};
\node[right=1pt of C] {$Z$};
\node[left=1pt of Si] {$B \cup \{i\}$};
\node[left=1pt of AA] {$A$};
\draw (A) -- (C);			
\draw (B) -- (Si);
\draw (C) -- (Si);
\draw (AA) -- (Si);
\end{tikzpicture}
\caption*{(i)}
\end{minipage}
\hfill
\begin{minipage}{0.45\textwidth}
\centering
\begin{tikzpicture}[scale=1]
\node at (0,0) (B) {\textbf{$\bullet$}};
\node at (0,2) (A) {\textbf{$\bullet$}};
\node at (2,2) (Si) {\textbf{$\bullet$}};
\node at (2,0) (C) {\textbf{$\bullet$}};
\node at (2,3) (AA) {\textbf{$\bullet$}};
\node[left=1pt of A] {$X'$};
\node[left=1pt of B] {$Y'$};
\node[right=1pt of C] {$Z'$};
\node[right=1pt of Si] {$B \cup \{i\}$};
\node[right=1pt of AA] {$A$};
\draw (A) -- (B);
\draw (Si) -- (C);
\draw (Si) -- (AA);
\draw (A) -- (C);
\end{tikzpicture}
\caption*{(ii)}
\end{minipage}
\end{figure}
			
In case (i), if $i \notin Y\cup Z$, then $Y, Z \subseteq B$, and $B \not \subseteq X$ as $Y$ is incomparable to $X$, so $Z\neq B$. Moreover, since $X$ is incomparable to $B \cup \{i\}$, $X \not \subseteq B$. Therefore $X$ and $B$ are incomparable. If $Y\neq B$, then $B, X, Y, Z$ form an $\mathcal N$ in $\mathcal F$, a contradiction. If $Y=B$, then $Z$ is incomparable to $B$ but $Z \subset B\cup \{i\}$, so $i\in Z$, a contradiction. Hence $i \in Y$ or $i\in Z$, and let $D=Y$ or $D= Z$ such that $i\in D$.

Since $Y, Z \subset A$ and $M\subset A$, all of $X, Y, Z, A, M$ are in the same component $\mathcal G$ of $\mathcal F$, thus comparable to $M$ by Lemma~\ref{lem:max/min midpoint comparable to everything}. Since $i\notin M$, we can only have $M\subset D$. This immediately implies that $M\cup\{i\}\subset D\subset B\cup\{i\}$ as desired.
			
In case (ii), we begin by showing that $i\in Z'$. Suppose that $i\notin Z'$, then $Z' \subseteq B$. We also observe that $Y'$ is incomparable to $B$, since $Y'$ is incomparable to both $Z'$ and $B \cup \{i\}$. Next, $X', Y', Z', B$ cannot form an $\mathcal N$ in $\mathcal F$, so we must have $B \subset X'$ or $B=Z'$. In either case we get $M \subset B\subset X'$. Since $X'$ is incomparable to $B \cup \{i\}$, we get that $i \notin X'$, and consequently $X'$ is incomparable to $M \cup \{i\}$. Furthermore, by Lemma~\ref{lem:simplified case analysis}, we may assume that $X' \subset A$. 

Putting everything together we get that $X', Y', Z'$ are all in the component $\mathcal G$ of $\mathcal F$, and hence comparable to $M$. In particular, this means that $M \subset Y'$, as otherwise $Y' \subseteq M \subset B \cup \{i\}$, a contradiction.
			
We recall that $M \cup \{i\}$ is a minimal element of an $\mathcal N$ as depicted in the first picture of this proof. We consider the cases (1) and (2) separately.
			
In the first case corresponding to figure (1), we see that $Y'\subset X' \subset A$. Since $A, B, C, Y'$ cannot form an $\cN$, we must have $Y' \subset C$. But now $A, B, C, Y'$ form a butterfly, and so, by Corollary~\ref{cor:butterfly}, there exists $M'\in\mathcal F$ such that $Y',B\subset M'\subset A,C$. We observe that $M\subset C$ (to avoid $A,M,B,C$ from forming an $\mathcal N$), thus $i\notin C$. Since $M' \subset C$, we also get that $i \notin M'$. Moreover, since $M \subset Y' \subset M'$, we get that $M'$ is incomparable to $M \cup \{i\} $. Finally, $M\cup \{i\}, A, M', C$ form an $\mathcal N$, and $B \subset M'$, contradicting the maximality of $|B|$ (replaced by $M'$).
			
In the second case corresponding to figure (2), we first note that $X'$ is incomparable to $C$. Indeed, if $X' \subset C$ then $B \subset C$, a contradiction, and since $i \in C$, we cannot have $C\subseteq X'$. Therefore $M\cup \{i\}, A, X', C$ form an $\mathcal N$, which contradicts the maximality of $|B|$ as $B\subset X'$.
			
Therefore we must have that $i \in Z'$. Let $D= Z'\subset B\cup\{i\}$. Since $D \subset A$, we get that $D \in \mathcal G$, thus $D$ is comparable to $M$. This gives $M\subset D$ since $i\in D$. Finally, we get that $M \cup \{i\} \subset D \subset B \cup \{i\}$ as claimed.
\end{proof}
We now observe now that Claim B implies that case (1) cannot happen, i.e. $M \cup \{i\}$ cannot be the minimal element of an $\mathcal N$ that is comparable to exactly one maximal (as depicted in (1)). Indeed, if that were the case, we have that $D\subset B\cup\{i\}\subset A$, and $C$ and $D$ are incomparable (as $i \notin C$, $i \in D$, and $C \not \subset D$ otherwise $C \subset A$). This now implies that $A, B, C, D$ form an $\mathcal N$ in $\mathcal F$, a contradiction.
		
Therefore, $M\cup\{i\}$ can only be the minimal element of an $\mathcal N$ that is comparable to both maximal elements. In that case, $B$ and $D$ are incomparable  ($i\in D$, so $D\not\subseteq B$, and $B\not\subset D$ since $D\subset B\cup\{i\}$), which in turn implies that $D$ and $C$ are incomparable (if $D\subset C$, then $A,B,D,C$ form an $\mathcal N$ in $\mathcal F$, and if $C\subseteq D$, then $C\subset A$). Therefore, we have the following Hasse diagram.
\begin{center}
\begin{tikzpicture}[scale=1]
\node at (0,0) (B) {\textbf{$\bullet$}};
\node at (0,2) (A) {\textbf{$\bullet$}};
\node at (2,2) (C) {\textbf{$\bullet$}};
\node at (1,1) (Z) {\textbf{$\bullet$}};					\node at (2,0) (Si) {\textbf{$\bullet$}};
\node at (1,-1) (S) {\textbf{$\bullet$}};
\node[left=1pt of A] {$A$};
\node[right=1pt of Si] {$M \cup \{i\}$};
\node[left=1pt of Z] {$D$};			
\node[right=1pt of C] {$C$};
\node[left=1pt of B] {$B$};
\node[right=1pt of S] {$M$};
\draw (A) -- (B);
\draw (S) -- (B);
\draw (Z) -- (Si);
\draw (A) -- (Z);
\draw (C) -- (Si);
\draw (S) -- (Si);
\end{tikzpicture}
\end{center}
The next claim tells us that, by our choice of $A$ and $B$, there is not much `room' between them.
\begin{claim} Let $T,T' \in \mathcal F$. If $B \subseteq T \subseteq A$, then $T = B$ or $T = A$. Also, if $B \subset T'$, then $A \subseteq T'$.
\end{claim}
\begin{proof}
Suppose that $T\neq A$ and $T\neq B$. Then $M\subset B \subset T \subset A$. Since $C$ is incomparable to both $A$ and $B$, it is therefore incomparable to $T$. If $i \notin T$, then $A, T, C, M\cup \{i\}$ form an $\mathcal N$, contradicting the maximality of $|B|$ (replace by $T$). If $i \in T$, then $T, B, C, M\cup \{i\}$ form an $\mathcal N$, contradicting the minimality of $|A|$ (replaced by $T$). Thus, $T=A$ or $T=B$.
			
Next, suppose that $B \subset T'$ but $A \not \subseteq T'$. By the first part of the claim, $T' \not \subset A$, hence $T'$ must be incomparable to $A$. For $T', B, A, D$ to not form an $\mathcal N$ in $\mathcal F$, we must have have $D \subset T'$. However, now $T',B,A,D$ form a butterfly in $\mathcal F$. By Corollary~\ref{cor:butterfly}, there exists $M'\in\mathcal F$ such that $B,D \subset M' \subset A,T'$, contradicting the first part of the claim.
\end{proof}
We now look at $A\setminus\{i\}$. By assumption, $A\setminus\{i\} \notin \mathcal F$, thus it must form an $\mathcal N$ when added to $\mathcal F$.
\begin{claim}$A\setminus\{i\}$ cannot be a minimal element of any $\mathcal N$ in $\mathcal F\cup\{A\setminus\{i\}\}$.
\end{claim}
\begin{proof}By assumption, $A\setminus\{i\} \notin \mathcal{F}$, thus it must form an $\mathcal N$ when added to $\mathcal F$. Suppose it is one of the minimal elements of such an $\mathcal N$. We therefore have the two cases illustrated below, where $X,Y,Z,X',Y',Z'\in\mathcal F$.
\begin{figure}[H]
\begin{minipage}{0.45\textwidth}
\centering
\begin{tikzpicture}[scale=1]
\node at (0,0) (Si) {\textbf{$\bullet$}};
\node at (0,2) (A) {\textbf{$\bullet$}};
\node at (2,2) (C) {\textbf{$\bullet$}};
\node at (2,0) (B) {\textbf{$\bullet$}};
\node at (0,-1) (S) {\textbf{$\bullet$}};
\node[left=1pt of A] {$X$};
\node[left=1pt of Si] {$A\setminus\{i\}$};
\node[right=1pt of C] {$Y$};
\node[right=1pt of B] {$Z$};
\node[left=1pt of S] {$B$};
\draw (A) -- (B);
\draw (B) -- (C);
\draw (A) -- (Si);
\draw (S) -- (Si);
\end{tikzpicture}
\end{minipage}
\hfill
\begin{minipage}{0.45\textwidth}
\centering
\begin{tikzpicture}[scale=1]
\node at (0,0) (B) {\textbf{$\bullet$}};
\node at (0,2) (A) {\textbf{$\bullet$}};
\node at (2,2) (C) {\textbf{$\bullet$}};
\node at (2,0) (Si) {\textbf{$\bullet$}};
\node at (2,-1) (S) {\textbf{$\bullet$}};
\node[left=1pt of A] {$X'$};
\node[right=1pt of Si] {$A\setminus\{i\}$};
\node[right=1pt of C] {$Y'$};
\node[left=1pt of B] {$Z'$};
\node[right=1pt of S] {$B$};
\draw (A) -- (B);
\draw (C) -- (Si);
\draw (A) -- (Si);
\draw (S) -- (Si);
\end{tikzpicture}
\end{minipage}
\end{figure}
By Lemma~\ref{lem:simplified case analysis2}, we may assume that $B \subset Z$ in the first case, and $B\subset Z'$ in the second case. Consequently, this gives by Claim C that $A \subseteq Z$ (or $Z'$ in the second case), thus $A\setminus\{i\} \subset Z$ (or $Z'$ in the second case), a contradiction.
\end{proof}
We are now going to inductively define sets $A_m, B_m\in\mathcal F$ for all integers $m\geq 0$ as follows. We set $A_0=A$ and $B_0=B$. We know that $A_0\setminus\{i\}\notin \mathcal F$ can only be a maximal element of any $\mathcal N$ in $\mathcal F\cup\{A_0\setminus\{i\}\}$. Among all such possible configurations, we define $B_1$ to be the minimal element comparable to both maximal elements of such an $\mathcal N$, of maximum cardinality. Having chosen $B_1$, we then define $A_1$ to be the other maximal element of such an $\mathcal N$ with minimal cardinality. This is illustrated below, where $K_1,K_1'\in\mathcal F$.
\begin{figure}[H]
\begin{minipage}{0.45\textwidth}
\centering
\begin{tikzpicture}[scale=1]
\node at (0,2) (Si) {\textbf{$\bullet$}};
\node at (2,2) (A) {\textbf{$\bullet$}};
\node at (0,0) (B) {\textbf{$\bullet$}};
\node at (2,0) (C) {\textbf{$\bullet$}};
\node[right=1pt of A] {$A_1$};
\node[left=1pt of B] {$K_1$};
\node[right=1pt of C] {$B_1$};
\node[left=1pt of Si] {$A_{0}\setminus\{i\}$};
\draw (A) -- (C);
\draw (B) -- (Si);
\draw (C) -- (Si);		
\end{tikzpicture}
\end{minipage}
\begin{minipage}{0.45\textwidth}
\centering
\begin{tikzpicture}[scale=1]
\node at (0,0) (B) {\textbf{$\bullet$}};
\node at (0,2) (A) {\textbf{$\bullet$}};
\node at (2,2) (Si) {\textbf{$\bullet$}};
\node at (2,0) (C) {\textbf{$\bullet$}};
\node[left=1pt of A] {$A_1$};
\node[left=1pt of B] {$K_1'$};
\node[right=1pt of C] {$B_1$};
\node[right=1pt of Si] {$A_0\setminus\{i\}$};
\draw (A) -- (B);
\draw (Si) -- (C);
\draw (C) -- (A);
\end{tikzpicture}
\end{minipage}
\end{figure}
We will show that $i\in A_1$ and that $A_1\setminus\{i\}\notin\mathcal F$ can only be the maximal element of an $\mathcal N$ in $\mathcal F\cup\{A_1\setminus\{i\}\}$. After that, we repeat the process, where we define $B_2\subset A_2$ from $B_1\subset A_1$, the same way we defined $B_1\subset A_1$ from $B_0\subset A_0$. In order to show that the process can always continue, we will show, by induction on $m$ the following properties.
		
\noindent\textbf{Claim 1.} \textit{$A_m \subset A_{m-1}$, and $A_m$ is incomparable to $B_{m-1}$. Moreover, we either have  $B_m$ and $B_{m-1}$ incomparable, or $B_m \subset B_{m-1}$.}
	
\noindent\textbf{Claim 2.} \textit{If $B_m \subset T$, where $T\in\mathcal F$, then $T$ is comparable to $A_{m-1}$.}
			
\noindent\textbf{Claim 3.} \textit{$B_m \cup \{i\}$ can only be a maximal element of an $\mathcal N$ in $\mathcal F\cup\{B_m\cup\{i\}\}$.}
			
\noindent\textbf{Claim 4.} \textit{There exists $D_m \in \mathcal F$ such that $M\cup \{i\}\subset D_m\subset B_m\cup \{i\}$.}
			
\noindent\textbf{Claim 5.} \textit{Let $T\in\mathcal F$. If $B_m \subseteq T \subseteq A_m$, then $T = A_m$ or $T = B_m$. If $B_m \subset T$, then $A_m \subseteq T$.}
			
\noindent\textbf{Claim 6.}  \textit{$A_m\setminus\{i\}$ can only be a maximal element in an $\mathcal N$ in $\mathcal F\cup\{A_m\setminus\{i\}\}$.}

Note that since $B_m\subset A_{m-1}\setminus\{i\}$, we have by construction that $i\notin B_m$ for all $m\geq 0$. Moreover, since $A_m\subset A_{m-1}$, but $A_m$ and $A_{m-1}\setminus\{i\}$ are incomparable, we must have $i\in A_m$ for all $m\geq 0$. These observations, together with Claim 4, give us the following picture.
\begin{center}
\begin{tikzpicture}[scale=1]
\node at (0,0) (B) {\textbf{$\bullet$}};
\node at (0,4) (A) {\textbf{$\bullet$}};
\node at (1,3) (A1) {\textbf{$\bullet$}};
\node at (1,0) (B1) {\textbf{$\bullet$}};
\node at (2,2) (A2) {\textbf{$\bullet$}};
\node at (2,0) (B2) {\textbf{$\bullet$}};
\node at (4,0) (Mu) {\textbf{$\bullet$}};
\node at (2,-1) (M) {\textbf{$\bullet$}};
\node[left=1pt of A] {$A_0=A$};
\node[right=1pt of A1] {$A_1$};
\node[right=1pt of A2] {$A_2 \dots$};
\node[left=1pt of B] {$B_0=B$};
\node[right=1pt of B1] {$B_1$};
\node[right=1pt of B2] {$B_2 \dots$};
\node[right=1pt of Mu] {$M \cup \{i\}$};
\node[right=1pt of M] {$M$};				
\draw (A) -- (B);
\draw (A) -- (A1);
\draw (A2) -- (A1);
\draw (A2) -- (B2);
\draw (B) -- (M);
\draw (B1) -- (M);
\draw (B2) -- (M);
\draw (Mu) -- (M);												
\draw (A2) -- (Mu);
\draw (A1) -- (B1);
\end{tikzpicture}
\end{center}
Once these claims have been established, a contradiction is immediately reached as the process must terminate between $A_0$ and $M\cup\{i\}$, hence in  finite number of steps.

We prove these claims by induction on $m$. When $m=0$, Claims 1 and 2 are vacuously true, and Claims 3, 4, 5 and 6 are Claims A, B, C and D from above. Let now $m\geq 1$, and assume that they are true for all $m'<m$.
\begin{claim1} $A_m \subset A_{m-1}$, and $A_m$ is incomparable to $B_{m-1}$. Moreover, we either have $B_m$ and $B_{m-1}$ incomparable, or $B_m \subset B_{m-1}$.
\end{claim1} 
\begin{proof} By construction, $A_m$ and $A_{m-1}\setminus\{i\}$ are part of an $\mathcal N$, and we have one of the two following cases illustrated below, where $K_m,K_m'\in\mathcal F$.
\begin{figure}[H]
\begin{minipage}{0.45\textwidth}
\centering
\begin{tikzpicture}[scale=1]
\node at (0,2) (Si) {\textbf{$\bullet$}};
\node at (2,2) (A) {\textbf{$\bullet$}};
\node at (0,0) (B) {\textbf{$\bullet$}};
\node at (2,0) (C) {\textbf{$\bullet$}};
\node[right=1pt of A] {$A_m$};
\node[left=1pt of B] {$K_m$};
\node[right=1pt of C] {$B_m$};
\node[left=1pt of Si] {$A_{m-1}\setminus\{i\}$};
\draw (A) -- (C);
\draw (B) -- (Si);
\draw (C) -- (Si);
\end{tikzpicture}
\caption*{(1.1)}
\end{minipage}
\begin{minipage}{0.45\textwidth}
\centering
\begin{tikzpicture}[scale=1]
\node at (0,0) (B) {\textbf{$\bullet$}};
\node at (0,2) (A) {\textbf{$\bullet$}};
\node at (2,2) (Si) {\textbf{$\bullet$}};
\node at (2,0) (C) {\textbf{$\bullet$}};
\node[left=1pt of A] {$A_m$};
\node[left=1pt of B] {$K_m'$};
\node[right=1pt of C] {$B_m$};
\node[right=1pt of Si] {$A_{m-1}\setminus\{i\}$};
\draw (A) -- (B);
\draw (Si) -- (C);
\draw (C) -- (A);
\end{tikzpicture}
\caption*{(1.2)}
\end{minipage}
\end{figure}
We show first that we always have $A_m \subset A_{m-1}$. Indeed, this must be the case in the picture on the left, as otherwise $A_{m-1}, A_m, B_m, K_m$ would form an $\mathcal N$ in $\mathcal F$. In the picture on the right, if $A_m \not \subset A_{m-1}$, they are incomparable as we cannot have $A_{m-1}\setminus\{i\}\subset A_{m-1}\subseteq A_m$. Thus, in order for $A_{m-1}, A_m, B_m, K_m'$ to not form an $\mathcal N$ in $\mathcal F$, we must have $K_m' \subset A_{m-1}$. This forms a butterfly, namely $A_{m-1}, A_m, B_m, K_m'$. By Corollary~\ref{cor:butterfly}, there exists $M'\in\mathcal F$ such that $K_m',B_m\subset M'\subset A_m, A_{m-1}$. Since $K_m' \subset A_{m-1}$, but it is incomparable to $A_{m-1}\setminus\{i\}$, we have that $i \in K_m'$, and so $i \in M'$. Since $M' \subset A_{m-1}$, we get that $M'$ is incomparable to $A_{m-1} \setminus\{i\}$. Therefore $M', K_m', B_m, A_{m-1}\setminus\{i\}$ form an $\mathcal N$, contradicting the minimality of $|A_m|$ (replaced by $M'$). Therefore $A_m\subset A_{m-1}$. This also implies that $i\in A_m$.

Next, suppose that $A_m$ and $B_{m-1}$ are comparable. If $B_{m-1}\subseteq A_m\subset A_{m-1}$, we get by Claim 5 for $m-1$ that $B_{m-1}=A_m$, a contradiction as $i\notin B_{m-1}$. For the same reason, we also cannot have $A_m\subset B_{m-1}$. Therefore, $A_m$ and $B_{m-1}$ are incomparable. Finally, we cannot have $B_{m-1}\subseteq B_m$, otherwise $B_{m-1}\subseteq B_m\subset A_m$, a contradiction.
\end{proof}
We now make a couple of useful observations that will help in future analysis. 

\noindent\textbf{Observation 1.} If $A_{m-1}\setminus\{i\}$ is the maximal comparable to both minimal elements of the $\mathcal N$, and $K_m$ is the other minimal (figure 1.1), if $B_m\subset B_{m-1}$, then $B_{m-1}$ and $K_m$ are incomparable. Indeed, $B_{m-1} \not \subseteq K_m$, otherwise $B_m \subset K_m$. Moreover, $K_m \not \subset B_{m-1}$, otherwise $B_{m-1}, K_m, B_m, A_m$ form an $\mathcal N$ in $\mathcal F$.
			
\noindent\textbf{Observation 2.} If $A_{m-1}\setminus\{i\}$ is only comparable to $B_m$, and $K_m'$ is the other minimal of the $\mathcal N$ (figure 1.2), then $B_{m-1}$ and $B_m$ are incomparable. Indeed, if $B_m \subset B_{m-1}$, then $K_m' \subset B_{m-1}$, otherwise $A_m, K'_m, B_m, B_{m-1}$ would form an $\mathcal N$ in $\mathcal F$. But then $K_m' \subset A_{m-1}\setminus \{i\}$, a contradiction.
\begin{claim1}
Let $T \in \cF$. If $B_m \subset T$, then $T$ is comparable to $A_{m-1}$.
\end{claim1}
\begin{proof} Suppose that there exists $T\in\mathcal F$ such that $B_m\subset T$, but $T$ and $A_{m-1}$ are incomparable, in particular $T\not\subseteq B_{m-1}$. By Claim 5 for $m-1$, we cannot have $B_{m-1}\subset T$, so  $T$ and $B_{m-1}$ are incomparable.

If $B_m$ and $B_{m-1}$ are incomparable, then $A_{m-1}, T, B_{m-1}, B_m$ form an $\mathcal N$ in $\mathcal F$, a contradiction.

If $B_m \subset B_{m-1}$, then we are in case (1.1) ($A_{m-1}\setminus\{i\}$ is comparable to both minimal elements). Since $A_{m-1}, K_m, B_m, T$ cannot form an $\mathcal N$, we must have $K_m \subset T$. Since, by Observation 1, $B_{m-1}$ is incomparable to $K_{m}$, we have that $A_{m-1}, B_{m-1}, K_{m}, T$ form an $\mathcal N$, a contradiction.
\end{proof}
\begin{claim1}$B_m \cup \{i\}$ can only be a maximal element of an $\mathcal N$ in $\mathcal F\cup\{B_m\cup\{i\}\}$.
\end{claim1}
\begin{proof}
Suppose that $B_m \cup \{i\} \notin \mathcal{F}$ is a minimal element of an $\mathcal{N}$ in $\mathcal F\cup\{B_m\cup\{i\}\}$. We will find an $\mathcal N$ formed by $A_{m-1}\setminus\{i\}$ which contradicts the maximality of $|B_m|$. We split the analysis into the following natural cases.
			
Case 1. $B_m\cup\{i\}$ is the minimal element comparable to exactly one maximal element, as illustrated below, where $X,Y,Z\in\mathcal F$.
\begin{center}
\begin{tikzpicture}[scale=1]
\node at (0,0) (Si) {\textbf{$\bullet$}};
\node at (0,2) (A) {\textbf{$\bullet$}};
\node at (2,2) (C) {\textbf{$\bullet$}};
\node at (2,0) (B) {\textbf{$\bullet$}};
\node at (0,-1) (S) {\textbf{$\bullet$}};
\node[left=1pt of A] {$X$};
\node[left=1pt of Si] {$B_m \cup \{i\}$};
\node[right=1pt of C] {$Y$};
\node[right=1pt of B] {$Z$};
\node[left=1pt of S] {$B_m$};
\draw (A) -- (B);
\draw (B) -- (C);
\draw (A) -- (Si);
\draw (S) -- (Si);
\end{tikzpicture}
\end{center}
By Lemma~\ref{lem:simplified case analysis2}, we may assume that $B_m \subset Z\subset Y$. Since $Z$ and $Y$ are incomparable to $B_m \cup \{i\}$, we have that $i \notin Z$ and $i \notin Y$. Since $B_m \subset X, Y, Z$, by Claim 2 above, $X, Y, Z$ are all comparable to $A_{m-1}$. Since $i\in A_{m-1}$, we get that $Y,Z\subset A_{m-1}\setminus\{i\}\subset A_{m-1}$. Since $X$ and $Y$ are incomparable, we must also have $X\subset A_{m-1}$. Since $i\in X$, this implies that $X$ and $A_{m-1}\setminus\{i\}$ are incomparable.
			 
By Claim 5 for $m-1$, we have that $B_{m-1} \not \subset X$, otherwise $X$ lies strictly between $B_{m-1}, A_{m-1}$. Also, since $B_{m-1}\subset A_{m-1}\setminus\{i\}$, we cannot have $X\subseteq B_{m-1}$. Therefore $X$ and $B_{m-1}$ are incomparable. Consequently $B_{m-1} \not \subseteq Z$, so $Z$ is either incomparable to $B_{m-1}$, or $Z \subset B_{m-1}$.
			
If $B_{m-1}$ and $Z$ are incomparable, then $A_{m-1}\setminus\{i\}, B_{m-1}, Z, X$ form an $\mathcal N$, contradicting the maximality of $|B_m|$ (replaced by $Z$). Therefore $Z \subset B_{m-1}$, which implies that $B_m \subset B_{m-1}$. This can only happen in the case where $A_{m-1}\setminus\{i\}$ is the maximal element comparable to both minimal elements (figure 1.1). By Observation 1, $B_{m-1} $ and $K_m$ are incomparable, so $K_m \not \subseteq Z$. Moreover, $Z \not \subset K_m$, as $B_m \subset Z$ and $B_m$ and $K_m$ are incomparable. Therefore, $Z$ and $K_m$ are incomparable.
			
Since $X, Z, B_{m-1}, K_m$ cannot form an $\mathcal N$, we must have $K_m$ and $X$ incomparable ($X\not\subseteq K_m$ as $B_m$ and $K_m$ are incomparable). But then $A_{m-1}\setminus\{i\}, K_m, Z, X$ form an $\mathcal N$, contradicting the maximality of $|B_m|$ (replaced by $Z$).
			
Case 2. $B_m\cup\{i\}$ is the minimal element comparable to both maximal elements, as illustrated below, where $X',Y',Z'\in\mathcal F$.
\begin{center}
\begin{tikzpicture}[scale=1]
\node at (0,0) (B) {\textbf{$\bullet$}};
\node at (0,2) (A) {\textbf{$\bullet$}};
\node at (2,2) (C) {\textbf{$\bullet$}};
\node at (2,0) (Si) {\textbf{$\bullet$}};
\node at (2,-1) (S) {\textbf{$\bullet$}};
\node[left=1pt of A] {$X'$};
\node[right=1pt of Si] {$B_m \cup \{i\}$};
\node[right=1pt of C] {$Y'$};
\node[left=1pt of B] {$Z'$};
\node[right=1pt of S] {$B_m$};
\draw (A) -- (B);
\draw (C) -- (Si);
\draw (A) -- (Si);
\draw (S) -- (Si);
\end{tikzpicture}
\end{center}
Again, we may assume that $B_m \subset X', Y', Z'$, which by Claim 2 gives that $X', Y', Z'$ are all comparable to $A_{m-1}$. Since $B_m \subset Z'$, but $B_m \cup \{i\} \not \subset Z'$, we have that $i\notin Z'$, thus $Z'\subset A_{m-1} \setminus\{i\}$. Since $Y'$ is incomparable to $Z'$, we must have that $Y' \subset A_{m-1}$, and since $X'$ is incomparable to $Y'$, we must have that $X' \subset A_{m-1}$. Then, the exact argument as in Case 1 follows. 

More precisely, in the same way, we get that $X'$ is incomparable to both $A_{m-1}\setminus\{i\}$ and $B_{m-1}$, and that $Z'$ is either incomparable to $B_{m-1}$, or $Z'\subset B_{m-1}$. If $B_{m-1}$ and $Z'$ are incomparable, then $A_{m-1}, B_{m-1}, X', Z'$ form an $\mathcal N$, contradicting the maximality of $B_{m}$ (replaced by $Z'$). If $Z'\subset B_{m-1}$, exactly as in Case 1, we get that $Z'$ and $K_m$ are incomparable, and by looking at $K_m, X', Z', B_{m-1}$, we deduce that $K_m$ and $X'$ must also be incomparable. Finally, this means that $A_{m-1}\setminus\{i\}, K_m, Z', X'$ form an $\mathcal N$, contradicting the maximality of $B_m$ (replaced by $Z'$).
\end{proof}
\begin{claim1}
There exists $D_m \in \mathcal F$ such that $M \cup \{i\} \subset D_m \subset B_m \cup \{i\}$.
\end{claim1}
\begin{proof}
By Claim 3, $B_m \cup \{i\}$ must be a maximal element of an $\mathcal N$ in $\mathcal F\cup\{B_m\cup\{i\}\}$. We therefore have two cases, as illustrated below, where $X,Y,Z,X',Y',Z'\in\mathcal F$.
\begin{figure}[H]
\begin{minipage}{0.45\textwidth}
\centering
\begin{tikzpicture}[scale=1]
\node at (0,2) (Si) {\textbf{$\bullet$}};
\node at (2,2) (A) {\textbf{$\bullet$}};
\node at (0,0) (B) {\textbf{$\bullet$}};
\node at (2,0) (C) {\textbf{$\bullet$}};
\node at (0, 3) (AA)  {\textbf{$\bullet$}};
\node[right=1pt of A] {$X$};
\node[left=1pt of B] {$Y$};
\node[right=1pt of C] {$Z$};
\node[left=1pt of Si] {$B_m \cup \{i\}$};
\node[left=1pt of AA] {$A_m$};
\draw (A) -- (C);
\draw (B) -- (Si);
\draw (C) -- (Si);
\draw (AA) -- (Si);		
\end{tikzpicture}
\caption*{(b.1)}
\end{minipage}
\hfill
\begin{minipage}{0.45\textwidth}
\centering
\begin{tikzpicture}[scale=1]
\node at (0,0) (B) {\textbf{$\bullet$}};
\node at (0,2) (A) {\textbf{$\bullet$}};
\node at (2,2) (Si) {\textbf{$\bullet$}};
\node at (2,0) (C) {\textbf{$\bullet$}};
\node at (2,3) (AA) {\textbf{$\bullet$}};
\node[left=1pt of A] {$X'$};
\node[left=1pt of B] {$Y'$};
\node[right=1pt of C] {$Z'$};
\node[right=1pt of Si] {$B_m \cup \{i\}$};
\node[right=1pt of AA] {$A_m$};
\draw (A) -- (B);
\draw (Si) -- (C);
\draw (Si) -- (AA);
\draw (A) -- (C);
\end{tikzpicture}
\caption*{(b.2)}
\end{minipage}
\end{figure}
In the first case, diagram (b.1), if $i \notin Y\cup Z$, then $Y, Z \subseteq B_{m}$. Moreover, $B_m \not \subset X$ as $Y$ is incomparable to $X$, so $Z\neq B_m$. Since $X$ is incomparable to $B_m \cup \{i\}$, we cannot have $X\subseteq B_m$. Therefore $X$ and $B_m$ are incomparable, and so either $Y\neq B_m$, $B_m, X, Y, Z$ form an $\mathcal N$ in $\mathcal F$, a contradiction; or $Y=B_m$, then $Z$ is incomparable to $B_m$ but $Z \subset B_m\cup \{i\}$, so $i\in Z$, a contradiction. 
			
Therefore we must have $i \in Y$ or $i\in Z$, and we take $D_m=Y$ or $D_m= Z$ such that $i\in D_m$. Since $Y, Z \subset A_m \subset A$, all of $X, Y, Z, A_m, A$ are in the same component $\mathcal G$, thus they must be comparable to $M$ by Lemma~\ref{lem:max/min midpoint comparable to everything}. Since $i \notin M$, we must have $M \subset D_m$. This gives $M \cup \{i\} \subset D_m \subset B_m \cup \{i\}$.

In the second case, diagram (b.2), we have that $i\in Z'$. 
			
Indeed, suppose that $i\notin Z'$. Then $Z' \subseteq B_m$. Furthermore, since $Y'$ is incomparable to $Z'$ and $B_m \cup \{i\}$, we must have that $Y'$ and $B_m$ are incomparable. In order for $X', Y', Z', B_m$ to not form an $\mathcal N$, we must have $B_m\subset X'$. Since $X'$ and $B_m\cup\{i\}$ are incomparable, we get that $i \notin X'$.

By Lemma~\ref{lem:simplified case analysis}, we may assume that $X' \subset A_m$. By Claim 1 we also have that $A_m \subset A_{m-1}$, thus $X' \subset A_{m-1}\setminus \{i\}$. Since $A_m$ and $B_{m-1}$ are incomparable (Claim 1), $X'$ cannot be incomparable to $B_{m-1}$, otherwise $A_{m-1}\setminus\{i\}, B_{m-1}, X', A_m$ would form an $\mathcal N$, contradicting the maximality of $|B_m|$ (replaced by $X'$). Since $B_{m-1}$ and $A_m$ are incomparable, we cannot have $B_{m-1} \subseteq X'$ otherwise $B_{m-1} \subset A_m$, thus $X' \subset B_{m-1}$. This implies that $B_m \subset B_{m-1}$, which means that $A_{m-1}\setminus\{i\}$ is the maximal element comparable to both minimal elements in an $\mathcal N$, i.e. we have the picture (1.1) in Claim 1. Putting everything together we get the following diagram.	
\begin{center}
\begin{tikzpicture}[scale=1]
\node at (0,3) (Si) {\textbf{$\bullet$}};
\node at (3,3) (A) {\textbf{$\bullet$}};
\node at (0,0) (B) {\textbf{$\bullet$}};
\node at (3,0) (C) {\textbf{$\bullet$}};
\node at (1.5,2) (S) {\textbf{$\bullet$}};
\node at (3,1) (X)  {\textbf{$\bullet$}};
\node[right=1pt of A] {$A_m$};
\node[left=1pt of B] {$K_m$};
\node[right=1pt of C] {$B_m$};
\node[left=1pt of Si] {$A_{m-1}\setminus\{i\}$};
\node[below=1pt of S] {$B_{m-1}$};
\node[right=1pt of X] {$X'$};
\draw (B) -- (Si);
\draw (S) -- (Si);
\draw (S) -- (Si);
\draw (A) -- (X);	
\draw (C) -- (X);			
\draw (S) -- (X);			
\end{tikzpicture}
\end{center}
By Observation 1, we know that $K_m$ and $B_{m-1}$ are incomparable, and that $B_{m-1}$ and $A_m$ are incomparable. Since $K_m$ is incomparable to both $B_{m-1}$ and $B_m$, and $B_m\subset X'\subset B_{m-1}$, $X'$ and $K_m$ must be incomparable too. Therefore $A_{m-1}\setminus\{i\}, K_m, X', A_m$ form an $\mathcal N$, contradicting the maximality of $|B_m|$ (replaced by $X'$).
			
Thus, we have $i \in Z'$. Let $D_m= Z'$. Since $Z' \subset A_m \subset A$, we have $D_m \in \mathcal G$ and therefore it is comparable to $M$. As $i\notin M$, we get that $M\subset D_m$, thus $M \cup \{i\} \subset D_m \subset B_m \cup \{i\}$ as desired.
\end{proof}
By the above claim, we have that $D_m\subset A_m$, and since $i\notin B_m$, $D_m$ and $B_m$ are incomparable. Therefore $A_m, B_m, D_m$ form a $\Lambda_2$, or a \textit{chevron}. We will use this fact to prove the next claim.
\begin{claim1} Let $T, T' \in \mathcal F$. If $B_m \subseteq T \subseteq A_m$, then $T = A_m$ or $T = B_m$. If $B_m \subset T'$, then $A_m \subseteq T'$. 
\end{claim1}	
\begin{proof}Suppose that $B_m \subset T \subset A_m$. 

If $A_{m-1}\setminus\{i\}$ is the maximal comparable to both minimal elements (figure (1.1) in Claim 1), since $K_m$ is incomparable to both $A_m$ and $B_m$, we must have that $T$ is incomparable to $K_m$. If $i\in T$, then $A_{m-1}\setminus\{i\}$ and $T$ are incomparable, and so $A_{m-1}\setminus\{i\}, K_m, B_m, T$ form an $\mathcal N$, contradicting the minimality of $|A_m|$ (replaced by $T$). If $i \notin T$, then $T\subset A_m\subset A_{m-1}$, thus $T\subset A_{m-1}\setminus\{i\}$. Therefore $A_{m-1}\setminus\{i\}, K_m, A_m, T$ form an $\mathcal N$, contradicting the maximality of $|B_m|$ (replaced by $T$).
			
Now suppose that $A_{m-1}\setminus\{i\}$ is the maximal comparable to exactly one minimal element (figure 1.2 in Claim 1). If $i \notin T$, then $T\subset A_{m-1}\setminus\{i\}$ is also incomparable to $K_m'$. This is because $T \not \subseteq K_m'$ as $K_m'$ and $B_m$ are incomparable, and $K_m' \not \subset T$, otherwise $K_m '\subset T \subset A_{m-1}\setminus\{i\}$. Therefore $A_{m-1}\setminus\{i\}, K_m', A_m, T$ form an $\mathcal N$, contradicting the maximality of $|B_m|$ (replaced by $T$). If $i \in T$, then $T \not \subseteq B_{m-1}$ as $i\notin B_{m-1}$. Also, since $T\subset A_{m-1}$, $T$ and $A_{m-1}\setminus\{i\}$ are incomparable. Moreover, $T \subset A_m \subset A_{m-1}$, so by Claim 5 with $m-1$, we cannot have $B_{m-1} \subset T$. Therefore $B_{m-1}$ and $T$ are incomparable. As remarked at the end of Claim 1, in this case we have that $B_{m-1}$ and  $B_m$ are incomparable. Therefore $A_{m-1}\setminus\{i\}, B_{m-1}, T, B_m$ form an $\mathcal N$, contradicting the minimality of $|A_m|$ (replaced by $T$).

For the second part, suppose that $B_m \subset T'$, but $A_m \not \subseteq T'$. By the first part of the claim, $T' \not \subset A_m$, therefore $T'$ and $A_m$ are incomparable.
			
We now consider the chevron $A_m, B_m, D_m$, which must not form an $\mathcal N$ with $T'$, thus $D_m \subset T'$. But now $A_m, D_m, B_m,T'$ form a butterfly, so by Corollary~\ref{cor:butterfly}, there exists $M'\in\mathcal F$ strictly between $A_m$ and $B_m$, contradicting the first part of the claim.
\end{proof}

\begin{claim1} $A_m\setminus\{i\}\notin\mathcal F$ can only be a maximal element of an $\mathcal N$ in $\mathcal F\cup\{A_m\setminus\{i\}\}$.
\end{claim1}
\begin{proof}
Suppose that $A_m\setminus\{i\} \notin \mathcal{F}$ is a minimal element of an $\mathcal{N}$ in $\mathcal F\cup\{A_m\setminus\{i\}\}$. Depending on the type of minimal element, we have the following two cases, where $X,Y,Z,X',Y',Z'\in\mathcal F$.
\begin{figure}[H]
\begin{minipage}{0.45\textwidth}
\centering
\begin{tikzpicture}[scale=1]
\node at (0,0) (Si) {\textbf{$\bullet$}};
\node at (0,2) (A) {\textbf{$\bullet$}};
\node at (2,2) (C) {\textbf{$\bullet$}};
\node at (2,0) (B) {\textbf{$\bullet$}};
\node at (0,-1) (S) {\textbf{$\bullet$}};
\node[left=1pt of A] {$X$};
\node[left=1pt of Si] {$A_m\setminus \{i\}$};
\node[right=1pt of C] {$Y$};
\node[right=1pt of B] {$Z$};
\node[left=1pt of S] {$B_m$};
\draw (A) -- (B);
\draw (B) -- (C);
\draw (A) -- (Si);
\draw (S) -- (Si);
\end{tikzpicture}
\end{minipage}
\hfill
\begin{minipage}{0.45\textwidth}
\centering
\begin{tikzpicture}[scale=1]
\node at (0,0) (B) {\textbf{$\bullet$}};
\node at (0,2) (A) {\textbf{$\bullet$}};
\node at (2,2) (C) {\textbf{$\bullet$}};
\node at (2,0) (Si) {\textbf{$\bullet$}};
\node at (2,-1) (S) {\textbf{$\bullet$}};
\node[left=1pt of A] {$X'$};
\node[right=1pt of Si] {$A_m\setminus\{i\}$};
\node[right=1pt of C] {$Y'$};
\node[left=1pt of B] {$Z'$};
\node[right=1pt of S] {$B_m$};
\draw (A) -- (B);
\draw (C) -- (Si);
\draw (A) -- (Si);
\draw (S) -- (Si);
\end{tikzpicture}
\end{minipage}
\end{figure}
By Lemma~\ref{lem:simplified case analysis2}, we may assume that $B_m \subset Z$ in the first case, and $B_m\subset Z'$ in the second case. By Claim 5, we get that $A_m \subseteq Z$, thus $A_m\setminus\{i\} \subset Z$, a contradiction.
\end{proof}
		
This finishes the inductive step. Therefore, this produces an infinite sequence of sets $A_0 = A \supset A_1 \supset \dots \supset M \cup \{i\}$. However, $A_0$ is finite, so the process must terminate, a contradiction.
		
Therefore, there must exist $S\in\mathcal F$ such that $i\notin S$ and $S\cup\{i\}\in\mathcal F$, which finishes the proof.
\end{proof}
\begin{theorem}\label{thm:lower bound on saturation number} Let $\mathcal F$ be an $\mathcal N$-saturated family with ground set $[n]$. Then $|\mathcal F|\geq\frac{n+6}{4}$.\end{theorem}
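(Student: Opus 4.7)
The plan is to combine Proposition~\ref{prop:main result} with a short parity observation about cover pairs in the Boolean lattice. The proposition furnishes many covering pairs in $\mathcal F$ each tagged by a distinct \emph{label} (the single element by which the two sets of the pair differ), and the parity observation will imply that any collection of cover pairs with pairwise distinct labels must form a forest. This will give $|\mathcal F|\ge k+1$, where $k$ is the number of such witness pairs, which combined with the lower bound $k\ge n/2-|\mathcal F|+2$ yields the theorem.

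In detail, Lemma~\ref{lem:wlog F satisfies certain conditions} lets me replace $\mathcal F$ by an $\mathcal N$-saturated family of the same size that contains a component with a minimal midpoint $M$ satisfying $|M|\le n/2$. Writing $m=|\mathcal F|$, Proposition~\ref{prop:main result} then produces $k\ge n/2-m+2$ pairwise distinct singletons $i_1,\dots,i_k\in[n]$, each accompanied by a set $S_{i_j}\in\mathcal F$ with $i_j\notin S_{i_j}$ and $S_{i_j}\cup\{i_j\}\in\mathcal F$. Let $H$ be the graph on vertex set $\mathcal F$ whose edges are the $k$ pairs $\{S_{i_j},\,S_{i_j}\cup\{i_j\}\}$; since the two endpoints of each such edge differ by the single element $i_j$, the distinctness of the $i_j$ guarantees that these are $k$ distinct edges.

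The main (and only nontrivial) step is to show that $H$ contains no cycle. Suppose otherwise that $V_0,V_1,\dots,V_l=V_0$ is a cycle in $H$ (so $l\ge 3$), and write $V_s\triangle V_{s-1}=\{j_s\}$ for the label of the $s$-th edge. Telescoping symmetric differences gives
\[
\{j_1\}\triangle\{j_2\}\triangle\cdots\triangle\{j_l\}\;=\;V_0\triangle V_l\;=\;\emptyset,
\]
so every element of $[n]$ must appear an even number of times among $j_1,\dots,j_l$. But each $j_s$ is one of the distinct labels $i_1,\dots,i_k$, and the cycle uses each edge at most once, so every $j_s$ appears exactly once -- which is odd, a contradiction.

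Hence $H$ is a forest with $k$ edges, and so has at least $k+1$ vertices. Thus $m=|\mathcal F|\ge k+1\ge n/2-m+3$, which rearranges to $m\ge(n+6)/4$, as required. I do not foresee any serious obstacles: the only point to check is that the $k$ witness labels are pairwise distinct, which is immediate from the conclusion of Proposition~\ref{prop:main result}.
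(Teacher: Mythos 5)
Your proposal is correct and follows essentially the same route as the paper: reduce via Lemma~\ref{lem:wlog F satisfies certain conditions}, apply Proposition~\ref{prop:main result}, build the auxiliary graph on $\mathcal F$ with the witness pairs as edges, show it is acyclic, and compare edge and vertex counts. The only (cosmetic) difference is how acyclicity is argued -- you telescope symmetric differences and use parity of the distinct labels, while the paper tracks the membership of a single label around the cycle -- and both are valid.
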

\begin{proof}
Since we are only interested in the size of $\mathcal F$, we may assume by Lemma~\ref{lem:wlog F satisfies certain conditions} that $\mathcal F$ has a component $\mathcal G$ that contains a set $M$ comparable to all elements of $\mathcal G$ such that $|M|\leq n/2$. Then, by Proposition~\ref{prop:main result}, there are at least $n/2-|\mathcal F|+ 2 $ singletons $i \in [n]$ such that there exist $S_i, S_i \cup \{i\}\in \mathcal F, i \notin S_i$. For each such $i$, we fix a choice of $S_i$ with this property. We now construct a graph $G$ with vertex set $\mathcal F$, and edges only between the pairs $(S_i, S_i\cup \{i\})$, for all such singletons. By construction, $G$ has at least $n/2  -|\mathcal F|+ 2$ edges.

We observe that $G$ is acyclic. Indeed, suppose that there exists a cycle $X_1, \dots, X_{k-1}, X_k \in G$ with $X_k = X_1$, and $(X_i, X_{i+1}) \in E(G)$ for $1\leq i<k$. Let $j_i$ be the singleton corresponding to the edge $(X_i, X_{i+1})$, and since the edges are distinct, the $j_i$'s are also distinct. By construction, $j_1 \in X_1$ if and only if $j_1 \notin X_2$, if and only if $j_1 \notin X_3\dots$ if and only if $j_1 \notin X_k = X_1$, a contradiction.
		
Therefore $G$ is indeed acyclic, and so the number of edges is at most $|\mathcal F| -1$, which implies that $n/2-|\mathcal F|+2\leq |\mathcal F|-1$, hence $|\mathcal F|\geq\frac{n+6}{4}$. 
\end{proof}
Theorem~\ref{thm:lower bound on saturation number}, together with the fact that $\text{sat}^*(n, \mathcal N)\leq 2n$ shows that the saturation number of the $\mathcal N$ poset has indeed linear growth.
\begin{theorem}\label{mainresult2}
$\text{sat}^*(n,\mathcal N)=\Theta(n)$.
\end{theorem}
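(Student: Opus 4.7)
The plan is to combine the two bounds that are already available in the paper, since the final statement is essentially a packaging of what has been established.

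For the lower bound, I would invoke Theorem~\ref{thm:lower bound on saturation number} directly, which gives $|\mathcal F| \geq \frac{n+6}{4}$ for every $\mathcal N$-saturated family $\mathcal F$ with ground set $[n]$. This is already a linear lower bound in $n$, so it immediately yields $\text{sat}^*(n,\mathcal N) = \Omega(n)$.

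For the upper bound, I would appeal to the explicit construction mentioned in the introduction, namely $\mathcal F = \{\emptyset, [n]\} \cup \{\{i\} : 1 \leq i \leq n\} \cup \{\{1,2,\dots,i\} : 1 \leq i \leq n\}$, which is $\mathcal N$-saturated for all $n \geq 3$ and has size $2n$ (this is noted as being from \cite{ferrara2017saturation}). This gives $\text{sat}^*(n,\mathcal N) \leq 2n$, so $\text{sat}^*(n,\mathcal N) = O(n)$.

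Combining the two, $\frac{n+6}{4} \leq \text{sat}^*(n,\mathcal N) \leq 2n$, which establishes $\text{sat}^*(n,\mathcal N) = \Theta(n)$. There is no real obstacle here since all the heavy lifting was done in Proposition~\ref{prop:main result} and Theorem~\ref{thm:lower bound on saturation number}; this final theorem is simply a concluding summary that packages the matching upper and lower bounds into an asymptotic statement.
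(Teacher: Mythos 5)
Your proposal is correct and matches the paper exactly: the paper also obtains Theorem~\ref{mainresult2} by combining the lower bound $|\mathcal F|\geq\frac{n+6}{4}$ from Theorem~\ref{thm:lower bound on saturation number} with the upper bound $\text{sat}^*(n,\mathcal N)\leq 2n$ from the explicit construction of \cite{ferrara2017saturation}. Nothing further is needed.
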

\section{Concluding remarks and further work}
While the saturation number for the $\mathcal N$ poset is linear, the precise structure of an $\mathcal N$-saturated family remains elusive. Our proof sheds a bit of light on this question via Lemma~\ref{lem:complete bip graph has a midpoint}, where we show that in any such family, not just minimal, two antichains, one completely above the other, must have a `midpoint' between them that lies in the family. Moreover, Lemma~\ref{lem:max/min midpoint comparable to everything} tells us that in every connected component of the Hasse diagram of an $\mathcal N$-saturated family, there exists a point comparable to every other set of the component. Of course, these features are far from sufficient to characterise $\mathcal N$-saturated families, since for example any chain has both of these properties. We therefore ask the following.
\begin{question}
Does there exist a structural classification of an $\mathcal N$-saturated family? In other words, how does the Hasse diagram of such a family look like?
\end{question}
Since our proof worked with an arbitrary $\mathcal N$-saturated family, it is plausible that if one were to work with a \textit{minimal} $\mathcal N$-saturated family, i.e. one such that $|\mathcal F|=\text{sat}^*(n,\mathcal N)$, then more structural properties will emerge, and so we also ask the following.
\begin{question}
Let $\mathcal F$ be an $\mathcal N$-saturated family such that $|\mathcal F|=\text{sat}^*(n,\mathcal N)$. How does the Hasse diagram of $\mathcal F$ look like?
\end{question}
It is worth mentioning that while finding the rate of growth of the saturation number is challenging in itself, characterising the saturated families is almost uncharted territory, with the only exception being the antichain, where saturated families are essentially unions of disjoint chains \cite{bastide2024exact}.

Finally, one can ask what is the leading constant for $\text{sat}^*(n,\mathcal N)$. We believe that although our proof gives linearity, it does not give the optimal constant. In fact, we conjecture the following.
\begin{conjecture}
$\text{sat}^*(n,\mathcal N)=2n$.
\end{conjecture}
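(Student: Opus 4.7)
The upper bound $\text{sat}^*(n,\mathcal N)\leq 2n$ is already realized by the family $\{\emptyset,[n],\{i\},\{1,\ldots,i\}:1\le i\le n\}$ from the introduction, so the goal is to prove the matching lower bound $\text{sat}^*(n,\mathcal N)\ge 2n$. Theorem~\ref{thm:lower bound on saturation number} only gives $|\mathcal F|\ge (n+6)/4$, losing roughly a factor of eight. Its proof counts edges $(S_i,S_i\cup\{i\})$ in $\mathcal F\times\mathcal F$, one per singleton $i\notin M$, and then invokes acyclicity. To push the bound up to $2n$, the plan is to produce close to $2n$ such witness pairs in total.

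The first step is a two-sided strengthening of Proposition~\ref{prop:main result}. By Lemmas~\ref{lem:max/min midpoint comparable to everything} and~\ref{lem:max/min midpoint comparable to everything2}, each component $\mathcal G$ of $\mathcal F$ has both a minimal midpoint $M$ and a maximal midpoint $M^\star$ comparable to every set of $\mathcal G$, with $M\subseteq M^\star$. Using the complementation trick of Lemma~\ref{lem:wlog F satisfies certain conditions}, one may assume $|M|\le n/2$, while the dual of Proposition~\ref{prop:main result} applied at $M^\star$ should produce witness pairs $(T\setminus\{i\},T)$ for many $i\in M^\star$. Since $M\subseteq M^\star$, the union of the two regimes covers every $i\in[n]$, already yielding roughly $n$ witness pairs rather than $n/2$.

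The second and crucial step is to show that \emph{every} element $i\in[n]$ forces at least two distinct witness pairs in $\mathcal F$. I would try to obtain this by exploiting the chevron and butterfly configurations produced throughout the inductive proof of Proposition~\ref{prop:main result} (the sets $D_m,A_m,B_m$), together with the midpoint lemma (Corollary~\ref{cor:butterfly}): once a single pair $S,S\cup\{i\}\in\mathcal F$ is found, the surrounding Hasse structure should force a second such pair elsewhere in the family. Witness edges sharing the same label $i$ automatically form a matching in the graph $G$ (since a set has at most one $i$-flip), so having two such edges per label would force $G$ to decompose into essentially two forests, giving $|\mathcal F|\ge 2n-O(1)$; a small additional argument to track the remaining sets $\emptyset$, $[n]$ and the extremal midpoints should then remove the $O(1)$ slack.

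The second step is unquestionably the main obstacle. It effectively asserts a structural uniqueness statement for minimal $\mathcal N$-saturated families, namely that every element $i$ must be witnessed both by a "singleton-type" pair and by an "initial-segment-type" pair, mirroring the two parallel chains of the extremal example. Such uniqueness results are notoriously hard in poset saturation, as the authors themselves highlight in the concluding remarks, and I expect that closing this gap would require genuinely new structural ideas beyond the midpoint-and-butterfly toolkit developed in the present paper.
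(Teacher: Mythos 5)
This statement is an open conjecture in the paper: the authors prove only $\text{sat}^*(n,\mathcal N)\geq\frac{n+6}{4}$ (Theorem~\ref{thm:lower bound on saturation number}) and explicitly leave the exact value $2n$ as Conjecture in the concluding remarks, so there is no proof in the paper to compare against. Your proposal does not close this gap either: your ``second and crucial step'' (that every $i\in[n]$ forces two distinct witness pairs) is exactly the missing structural statement, and you only assert that the chevron/butterfly machinery ``should'' force a second pair without an argument. As written, this is a research plan with an admitted hole, not a proof.

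Beyond that, the quantitative bookkeeping in your sketch would not deliver $2n$ even if the structural step were granted. First, each application of Proposition~\ref{prop:main result} (or its dual at $M^\star$) loses an additive $|\mathcal F|$: two applications give at most $n-2|\mathcal F|+O(1)$ witness edges, and feeding this into a forest bound yields only $|\mathcal F|\geq n/3+O(1)$. Second, even in the ideal scenario of two distinct witness edges for every label $i$, your conclusion ``two forests, hence $|\mathcal F|\geq 2n-O(1)$'' is off by a factor of two: two forests on $|\mathcal F|$ vertices carry at most $2(|\mathcal F|-1)$ edges, so $2n$ edges only force $|\mathcal F|\geq n+1$. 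Third, the acyclicity argument of Theorem~\ref{thm:lower bound on saturation number} crucially uses that each label occurs on at most one chosen edge; once a label may occur twice, the parity argument breaks (a $4$-cycle on $S$, $S\cup\{i\}$, $S\cup\{i,j\}$, $S\cup\{j\}$ uses labels $i,j,i,j$), so the witness graph need not decompose into forests at all. Matching the extremal family $\{\emptyset,[n],\{i\},\{1,\dots,i\}\}$ would require controlling not just the number of witness edges but the global structure of $\mathcal F$, which is precisely the uncharted territory the authors point to.
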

\noindent\textbf{Acknowledgment.} We would like to thank the anonymous referees for a thorough read and helpful suggestions that have greatly helped the readability of the paper.
\bibliographystyle{amsplain}
\bibliography{references}

@article{ivan2020saturationbutterflyposet,
  title={Saturation for the {B}utterfly {P}oset},
  author={Ivan, Maria-Romina},
  journal={Mathematika},
  volume={66},
  number={3},
  pages={806--817},
  year={2020},
  publisher={Wiley Online Library}
}

@article{ferrara2017saturation,
  title={The {S}aturation {N}umber of {I}nduced {S}ubposets of the {B}oolean lattice},
  author={Ferrara, Michael and Kay, Bill and Kramer, Lucas and Martin, Ryan R and Reiniger, Benjamin and Smith, Heather C and Sullivan, Eric},
  journal={Discrete Mathematics},
  volume={340},
  number={10},
  pages={2479--2487},
  year={2017},
  publisher={Elsevier}
}

@article{keszegh2021induced,
  title={Induced and non-induced poset saturation problems},
  author={Keszegh, Bal{\'a}zs and Lemons, Nathan and Martin, Ryan R and P{\'a}lv{\"o}lgyi, D{\"o}m{\"o}t{\"o}r and Patk{\'o}s, Bal{\'a}zs},
  journal={Journal of Combinatorial Theory, Series A},
  volume={184},
  pages={105497},
  year={2021},
  publisher={Elsevier}
}

@article{freschi2023induced,
  title={The induced saturation problem for posets},
  author={Freschi, Andrea and Piga, Sim{\'o}n and Sharifzadeh, Maryam and Treglown, Andrew},
  journal={Combinatorial Theory},
  volume={3},
  number={3},
  year={2023},
  publisher={eScholarship Publishing}
}

@article{bastide2024exact,
  title={Exact antichain saturation numbers via a generalisation of a result of {L}ehman-{R}on},
  author={Bastide, Paul and Groenland, Carla and Jacob, Hugo and Johnston, Tom},
  journal={Combinatorial Theory},
volume={4},
  year={2024}
}

@article{polynomial,
title={A {P}olynomial {U}pper {B}ound for {P}oset {S}aturation},
author={Paul Bastide and Carla Groenland and Maria-Romina Ivan and Tom Johnston},
journal={European Journal of Combinatorics},
year={2024}
}

@article{gluing,
title={Gluing {P}osets and the {D}ichotomy of {P}oset {S}aturation {N}umbers},
author={Maria-Romina Ivan and Sean Jaffe},
year={2025},
journal={arXiv:2503.12223}
}

@article{diamondlinear,
title={The {S}aturation {N}umber for the {D}iamond is {L}inear},
author={Maria-Romina Ivan and Sean Jaffe},
year={2026},
journal={Bulletin of the London Mathematicsl Society},
volume={58},
pages={e70305}
}
\Addresses
\end{document}